\renewcommand{\phi}{\varphi}
\renewcommand{\mathcal}{\mathscr}
 \tikzset{>=latex}
\newcommand{\set}[1]{\left\{{#1}\right\}}
\newcommand{\R}{\mathbb R}
\newcommand{\Z}{\mathbb Z}
\newcommand{\N}{\mathbf N}
\newcommand{\T}{\mathbf T}
\newcommand{\C}{\mathbb C}
\newcommand{\K}{\mathbb K}
\newcommand{\I}{\mathbf I}
\newcommand{\id}{\mathrm{id}}
\newcommand{\cA}{\mathcal{A}}
\newcommand{\cL}{\mathcal{L}}
\newcommand\ie{\textit{i.e.} }
\DeclareMathOperator{\Imag}{Im}
\DeclareMathOperator{\Real}{Re}
\DeclareMathOperator{\Vect}{\text{\bf Vect}}
\DeclareMathOperator{\Rep}{\text{\bf Rep}}
\DeclareMathOperator{\PRep}{\text{\bf PRep}}
\DeclareMathOperator{\Hom}{Hom}
\DeclareMathOperator{\barc}{\text{\bf Bar}}
\DeclareMathOperator{\bS}{\mathscr{S}}
\newcommand{\udim}{{\underline\dim}}
\newcommand{\UQ}{\text{\bf U}Q}
\newcommand{\bu}{\bar{u}}
\newcommand{\bv}{\bar{v}}
 \newcommand*\quot[2]{
        \mathchoice
            {
                \text{\raise1ex\hbox{$#1$}\Big/\lower1ex\hbox{$#2$}}%
            }
            {
                #1\,\big/\,#2
            }
            {
                #1\,/\,#2
            }
            {
                #1\,/\,#2
            }
    }
\newtheorem{thm}{Theorem}[section]
\newtheorem{cor}[thm]{Corollary}
\newtheorem{lmm}[thm]{Lemma}
\newtheorem{prop}[thm]{Proposition}
\newtheorem*{thm*}{Theorem}
\theoremstyle{definition}
\newtheorem{defi}[thm]{Definition}
\newtheorem{ex}[thm]{Example}
\newtheorem{rmk}[thm]{Remark}
\title{Harder-Narasimhan Filtrations and Zigzag Persistence}
\author{Marc Fersztand, Vidit Nanda and Ulrike Tillmann}
\begin{document}

\begin{abstract}
    We introduce a sheaf-theoretic stability condition for finite acyclic quivers. Our main result establishes that for representations of affine type $\widetilde{\mathbb{A}}$ quivers, there is a precise relationship between the associated Harder-Narasimhan filtration and the barcode of the periodic zigzag persistence module obtained by unwinding the underlying quiver.
\end{abstract}
\maketitle

\section*{Introduction}

This paper concerns representations of acyclic quivers of affine type $\widetilde{\mathbb{A}}_n$. The underlying graph of any such quiver is the $n$-cycle as drawn below, but one does not obtain a directed cycle after the edges have been oriented:
\[
\xymatrixcolsep{.5in}
\xymatrixrowsep{.8in}
\xymatrix{
& & x_{n-1} \ar@{-}[dll]_-{e_0} \ar@{-}[drr]^-{e_{n-1}} & & \\
x_0 \ar@{-}[r]_-{e_1} & x_1 \ar@{-}[r]_-{e_2} & \cdots  \ar@{-}[r]_-{e_{n-3}} & x_{n-3} \ar@{-}[r]_-{e_{n-2}} & x_{n-2}
}
\]

Our goal here is to describe remarkable formulas which relate two discrete quantities that are associated to every finite-dimensional representation $V$ of such a quiver. The first one has its roots in geometric invariant theory, and constitutes a numerical reduction of $V$'s Harder-Narasimhan filtration along a special choice of stability condition. The second quantity of interest arises in the algebraic study of persistent homology. To obtain it, one first lifts $V$ to an $n$-periodic zigzag persistence module over the infinite quiver
\[
\xymatrixcolsep{.4in}
\xymatrix{
\cdots \ar@{-}[r]^-{e_{n-1}} & x_{n-1} \ar@{-}[r]^-{e_{0}} & x_0 \ar@{-}[r]^-{e_{1}}  & x_1 \ar@{-}[r]^-{e_{2}} & \cdots  \ar@{-}[r]^-{e_{n-2}} &  x_{n-2} \ar@{-}[r]^-{e_{n-1}} &  x_{n-1} \ar@{-}[r]^-{e_{0}} & \cdots,
}
\]
and then catalogues the multiplicities of its indecomposable summands. Before outlining the main contributions of our work, we provide brief summaries of both quantities below.

\subsection*{Harder-Narasimhan Filtrations} For the purposes of these introductory remarks, a {\em stability condition} on a finite acyclic quiver $Q$ with vertex set $Q_0$ is a map $\alpha:Q_0 \to \mathbb{R}$ that assigns a real number $\alpha_x$ to each vertex $x$. The $\alpha$-{\em slope} of a nonzero finite-dimensional representation $V$ of $Q$ is the ratio
\[
\phi_\alpha(V) := \frac{\sum_{x \in Q_0} \alpha_x \cdot \dim V_x}{\sum_{x \in Q_0} \dim V_x}.
\]
Here $V_x$ denotes the vector space assigned by $V$ to each vertex $x$. We call $V$ {\em semistable} if the inequality $\phi_\alpha(U) \leq \phi_\alpha(V)$ holds for every nonzero subrepresentation $U \subset V$. Once we fix $\alpha$, there exists a unique, finite length filtration $V^\bullet$ of $V$:
\[
0= V^0 \subsetneq V^1 \subsetneq \cdots \subsetneq V^\ell = V,
\]
where the successive quotient representations $S^j := V^j/V^{j-1}$ are semistable and have strictly decreasing $\alpha$-slopes. This $V^\bullet$ is called the Harder-Narasimhan filtration \cite{Harder1974, Reineke_2003} of $V$ along $\alpha$. Our first invariant, for a specific choice of $\alpha$ to be described later, is the map $Q_0 \to \mathbb{Z}^\ell$ that sends each $x$ in $Q_0$ to the vector $\left(\dim (S^1)_x, \ldots, \dim (S^\ell)_x\right)$.

\subsection*{Zigzag Persistence} Gabriel's foundational theorems from \cite{gabriel1972unzerlegbare} establish that the set of indecomposable representations of a type $\mathbb{A}_n$ quiver can be canonically identified with the collection of subintervals $[u,v] \subset [0,n-1]$ that have integral endpoints. The study of such representations has enjoyed a substantial recent renaissance, induced largely by their appearance in topological data analysis \cite{oudot, zigzag_pers}, where they are called {\em zigzag persistence modules}. Thus, by Gabriel's results, every finite-dimensional zigzag persistence module $P$ decomposes uniquely into a direct sum of the form 
\[
P \simeq \bigoplus_{[u,v]} \I[u,v]^{d_{u,v}};
\] 
here $[u,v] \subset [0,n-1]$ ranges over a finite set $\barc(P)$ called the {\em barcode} of $P$. For each such interval, $\I[u,v]$ is the corresponding indecomposable  whose {\em multiplicity}, denoted $d_{u,v}$ above, is a strictly positive integer. A similar interval decomposition theorem also holds for {\em infinite} zigzag persistence modules \cite{infinite_zigzag}, where the endpoints of intervals are allowed to attain $\pm \infty$ values. Every representation $V$ of a type $\widetilde{\mathbb{A}}_n$ quiver gives rise to an $n$-periodic infinite zigzag module ${\cL}V$, and the second invariant of interest is the associated multiplicity function $\barc({\cL}V) \to \mathbb{N}_{>0}$.

\subsection*{This Paper}

We introduce the {\bf Euler} stability condition $\epsilon:Q_0 \to \mathbb{R}$, which is defined on any finite quiver $Q$ as follows. The underlying graph of $Q$ forms a one-dimensional CW complex $X$; and every representation $V$ of $Q$ functorially induces a cellular sheaf  \cite{curry2014sheaves} ${\bS}V$ on $X$. From this sheaf, one can build a two-term cochain complex which computes the cohomology of $X$ with ${\bS}V$ coefficients. The corresponding Euler characteristic has the form
\[
\chi(X;{\bS}V) = \sum_{x \in Q_0} \left(1-\deg_\text{in}(x)\right) \cdot \dim V_x.
\]
Here $\deg_\text{in}(x)$ equals the number of edges which point to $x$. Our stability condition is therefore given by $\epsilon(x) := 1-\deg_\text{in}(x)$. 

Harder-Narasimhan filtrations along $\epsilon$ enjoy some surprising properties pertaining to barcode decompositions. As a warm-up exercise, we first consider the easiest examples, which are furnished by equioriented quivers of type $\mathbb{A}_n$
\[
\xymatrixcolsep{.6in}
\xymatrix{
x_0 \ar@{->}[r]^-{e_1} & x_1 \ar@{->}[r]^-{e_2} & \cdots \ar@{->}[r]^-{e_{n-1}} & x_{n-1}.
}
\]
The representations of such quivers are called (ordinary, discrete) persistence modules. For any such representation $V$, the Harder-Narasimhan filtration $V^\bullet$ along $\epsilon$ can be used to directly recover the multiplicities of all intervals which have the form $[0,j]$ in $\barc(V)$. Here is a simplified version of Theorem \ref{thm:hn_zerobars}.

\begin{thm*} [\textbf{A}] Let $V$ be a persistence module and let $ j_1 < \ldots < j_\ell$ be the collection of all indices $j$ in $\set{0,\ldots, n-1}$ satisfying $[0,j] \in \barc(V)$. Then $V^\bullet$ has length $\ell+1$, and for each integer $0 < k \leq \ell$ the quotient $S^k := V^k/V^{k-1}$ satisfies
\[
\dim (S^k)_{x_i} = \begin{cases} d_{0,j_k} & i \in [0,j_k], \\
0 & \text{otherwise}.
\end{cases}
\]
\end{thm*}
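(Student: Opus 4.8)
The plan is to combine the very explicit form of $\epsilon$ on an equioriented type $\mathbb{A}_n$ quiver with the \emph{uniqueness} of the Harder--Narasimhan filtration, so that it suffices to exhibit one filtration with semistable quotients of strictly decreasing slope. Since $x_0$ is the unique source, $\deg_\text{in}(x_0)=0$ and $\deg_\text{in}(x_i)=1$ for $i>0$, so $\epsilon(x_0)=1$ and $\epsilon(x_i)=0$ otherwise. Hence for any nonzero persistence module $W$,
\[
\phi_\epsilon(W)=\frac{\dim W_{x_0}}{\sum_{i}\dim W_{x_i}},
\]
and in particular $\phi_\epsilon(\I[u,v])=\tfrac{1}{v+1}$ when $u=0$ and $\phi_\epsilon(\I[u,v])=0$ when $u>0$. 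This already isolates the candidate successive quotients: using Gabriel's decomposition $V\simeq\bigoplus_{[u,v]}\I[u,v]^{d_{u,v}}$, I would set $S^k:=\I[0,j_k]^{d_{0,j_k}}$ for $1\le k\le\ell$, of slope $\tfrac1{j_k+1}$, and $S^{\ell+1}:=\bigoplus_{[u,v]\in\barc(V),\,u>0}\I[u,v]^{d_{u,v}}$, of slope $0$. Because $j_1<\cdots<j_\ell$, these slopes are strictly decreasing: $1>\tfrac1{j_1+1}>\cdots>\tfrac1{j_\ell+1}>0$.

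The technical core is verifying that each $S^k$ is $\epsilon$-semistable. For $S^{\ell+1}$ this is immediate, since every subrepresentation $U$ has $U_{x_0}=0$ and therefore slope $0=\phi_\epsilon(S^{\ell+1})$. For $S^k=\I[0,j_k]^{d}$ with $d=d_{0,j_k}$, I would first record the structural fact that all structure maps of $\I[0,j_k]^d$ from $x_0$ through $x_{j_k}$ are identities of $\mathbb{C}^d$ while the later spaces vanish; compatibility of a subrepresentation with an identity map forces inclusions, so a subrepresentation $U\subseteq\I[0,j_k]^d$ is exactly a nested chain $U_{x_0}\subseteq U_{x_1}\subseteq\cdots\subseteq U_{x_{j_k}}\subseteq\mathbb{C}^d$ with $U_{x_i}=0$ for $i>j_k$. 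Consequently
\[
\sum_i\dim U_{x_i}=\sum_{i=0}^{j_k}\dim U_{x_i}\ \ge\ (j_k+1)\dim U_{x_0},
\]
whence $\phi_\epsilon(U)\le\tfrac1{j_k+1}=\phi_\epsilon(S^k)$, proving semistability.

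With semistability established, I would produce the filtration directly: put $V^k:=\bigoplus_{m=1}^{k}\I[0,j_m]^{d_{0,j_m}}$ for $0\le k\le\ell$ and $V^{\ell+1}:=V$. Each inclusion is that of a direct summand, the successive quotient $V^k/V^{k-1}$ is precisely $S^k$ for $k\le\ell$, and $V^{\ell+1}/V^{\ell}\cong S^{\ell+1}$; the quotients are semistable with strictly decreasing slopes, so by the uniqueness clause in the definition of the Harder--Narasimhan filtration this \emph{is} $V^\bullet$ (the final step $S^{\ell+1}$ being present precisely when some bar of $V$ does not start at $0$, which accounts for the length $\ell+1$). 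The displayed dimension formula for $\dim(S^k)_{x_i}$ is then simply the description of the interval module $\I[0,j_k]^{d_{0,j_k}}$.

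The one place I expect genuine care to be needed is the structural claim that subrepresentations of $\I[0,j]^d$ are nested flags of subspaces of $\mathbb{C}^d$: this rests on the consecutive structure maps being isomorphisms in the relevant range, so that a subrepresentation is determined by a compatible family of subspaces and compatibility with the identity map upgrades to an inclusion $U_{x_i}\subseteq U_{x_{i+1}}$. Granting that, the slope computation, the monotonicity of $\{1/(j_k+1)\}$, and the assembly of the filtration are all routine.
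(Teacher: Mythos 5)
Your proof is correct, and it takes a genuinely different and more elementary route than the paper's. The paper first develops a general framework for \emph{antitone} slopes (the lexicographic monotonicity condition on $\phi_\alpha(\I[u,v])$), proving that every interval module is semistable for such a slope (Lemma \ref{lmm:semi-stable_intervals}, which relies on Lemma \ref{lmm:seq_and_slope} to bound the slope of an arbitrary submodule by the slopes of bars in its own decomposition), and then uses the additivity of HN $\mathbb{R}$-filtrations under direct sums (Proposition \ref{prop:direct_sum_HN}) to assemble the filtration of $V$ via Proposition \ref{prop:HN-persitence_module} and Corollary \ref{cor:HN-filtr_An_quotients}; Theorem \ref{thm:hn_zerobars} is then a direct specialization after computing $\phi_\epsilon$ on intervals. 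You instead write down the candidate filtration directly on the Gabriel decomposition, verify semistability of each block $\I[0,j_k]^{d}$ by hand via the concrete observation that its subrepresentations are nested flags $U_{x_0}\subseteq\cdots\subseteq U_{x_{j_k}}\subseteq\K^{d}$ (so $\sum_i\dim U_{x_i}\geq (j_k+1)\dim U_{x_0}$), handle the slope-zero tail $S^{\ell+1}$ by noting any submodule vanishes at $x_0$, and then invoke the uniqueness clause of Theorem \ref{thm:HN_filtration}. This sidesteps both the antitone abstraction and Proposition \ref{prop:direct_sum_HN} entirely; the cost is that your argument is bespoke to $\epsilon$ on the equioriented $\mathbb{A}_n$ quiver, whereas the paper's framework is reused for other antitone stability conditions (e.g., the totally stable ones of Kinser discussed after Theorem \ref{thm:hn_zerobars}). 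One small caveat you flagged but should make explicit: if every bar of $V$ starts at $0$ then $S^{\ell+1}=0$ and the filtration has length $\ell$, not $\ell+1$; this degenerate-case ambiguity is present in the statement itself (and in Theorem \ref{thm:hn_zerobars}'s formula $1+\#J$), so it is not a defect of your argument, but a clean writeup should either assume $J^{*}\neq\emptyset$ or drop the trivial top step.
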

\noindent Thus, the Harder-Narasimhan filtration of $V$ along $\epsilon$ recovers (the multiplicities of) all those intervals in $\barc(V)$ which have left endpoint $0$. We show that one can also extract similar formulas for multiplicities of all the other intervals in $\barc(V)$ -- and hence, the entire isomorphism class of $V$ -- by availing of Harder-Narasimhan filtrations along $\epsilon$ of $V$'s restriction to certain truncated subquivers. In future work, we will describe a method for recovering multiplicities of all indecomposables (for representations of a different quiver) by using several different stability conditions at once.

Turning now to the central focus of this paper, we consider a representation $V$ of a type $\widetilde{\mathbb{A}}_n$ quiver $Q$ over an algebraically closed field. The indecomposable summands of $V$ have two possible forms, illustrated below:
\begin{center}
    \includegraphics[scale=.35]{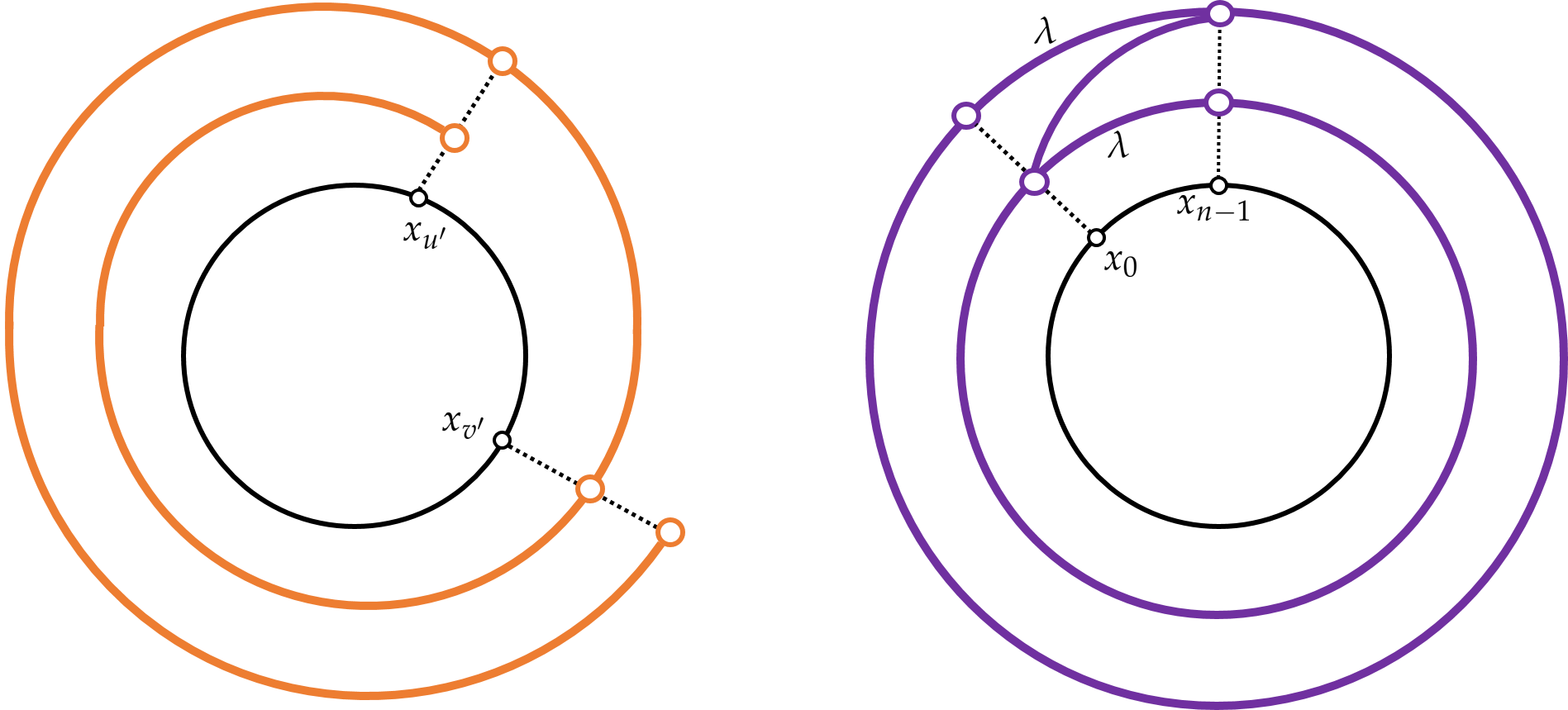}
\end{center}
To the left we have $\N[u,v]$, which is obtained by wrapping an interval $[u,v] \subset \Z$ around $Q$; here $u'$ and $v'$ equal $u$ and $v$ modulo $n$ respectively. To the right lies $\T[\lambda;w]$, where all vector spaces have the same dimension $w \geq 1$ and all edge-maps are identities {\em except} for a Jordan block with diagonal $\lambda \neq 0$ over $e_0$. As mentioned above, $V$ lifts to a representation ${\cL}V$ of the infinite zigzag quiver $\UQ$ obtained by unwinding $Q$. This lift operation $\cL$ forms a functor from the category of representations of $Q$ to the full subcategory of representations of $\UQ$ spanned by $n$-periodic objects. Moreover, ${\cL}V$ decomposes as a direct sum of the form
\begin{align}\label{eq:LVdecomp}
{\cL}V \simeq \I[-\infty,\infty]^{d^*_\infty} \oplus \bigoplus_{[u,v]}\left(\bigoplus_{c \in \Z} \I[u+cn,v+cn]\right)^{d^*_{u,v}}.
\end{align}
The first term on the right consists of infinite intervals corresponding to the $\T[\lambda;w]$ summands of $V$ while the second term consists of (infinitely many) $n$-shifted copies of $\I[u,v]$ corresponding to the $\N[u,v]$ summands. 

Here is a condensed description of our main result, Theorem \ref{thm:main}. 
\begin{thm*} [\textbf{B}]
Let $V$ be a representation of an acyclic type $\widetilde{\mathbb{A}}_n$ quiver $Q$ over an algebraically closed field. Let $V^\bullet$ be the Harder-Narasimhan filtration of $V$ along $\epsilon$, and let $S^j := V^j/V^{j-1}$ be its successive quotients. If $\phi_\epsilon(S^j)$ is nonzero, then for each vertex $x \in Q_0$ we have:
\[
\dim (S^j)_x = \sum_{\N[u,v]} d^*_{u,v} \cdot \dim \N[u,v]_x
\]
where the sum is over all $\N[u,v]$ appearing in the decomposition of $V$ which have the same $\epsilon$-slope as $S^j$. And similarly, if $\phi_\epsilon(S^j)$ equals $0$, then for each vertex $x \in Q_0$, we have
\[
\dim (S^j)_x = d^*_\infty + \sum_{\N[u,v]} d^*_{u,v} \cdot \dim \N[u,v]_x
\]
where the sum is over indecomposable summands $\N[u,v]$ of $V$ that satisfy $\phi_\epsilon(\N[u,v]) = 0$.
\end{thm*}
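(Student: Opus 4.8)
The plan is to reduce the statement to two ingredients: first, a precise understanding of the $\epsilon$-slope of each indecomposable summand of $V$, and second, the fact that the Harder-Narasimhan filtration is compatible with direct sum decompositions into semistable pieces of equal slope. I would begin by computing $\phi_\epsilon$ on the two families of indecomposables. For $\N[u,v]$, since $\epsilon(x) = 1 - \deg_\text{in}(x)$ and $\chi(X;\bS V) = \sum_x \epsilon(x)\dim V_x$, the numerator of $\phi_\epsilon(\N[u,v])$ is exactly the Euler characteristic of the associated cellular sheaf, which one can read off combinatorially from the interval $[u,v]$ and the orientation of $Q$; the denominator is $\sum_x \dim \N[u,v]_x = \len[u,v]$ (or its image after wrapping). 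For $\T[\lambda;w]$, every vector space has dimension $w$ and the representation is supported on the whole cycle, so the numerator is $w \sum_{x \in Q_0}\epsilon(x) = w(n - \#\text{edges}) = 0$ because a type $\widetilde{\mathbb{A}}_n$ quiver has exactly $n$ vertices and $n$ edges. Hence $\phi_\epsilon(\T[\lambda;w]) = 0$ identically, which already explains why the $d^*_\infty$ term appears only in the slope-zero case.

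Next I would establish that each indecomposable is itself semistable with respect to $\epsilon$, or more precisely that the relevant direct sums are. For the $\T[\lambda;w]$ summands this should follow from the fact that any subrepresentation is again supported on the full cycle (any proper nonzero subrepresentation of a $\T$-type module, being built from the single Jordan block, still occupies every vertex with some dimension, and the Euler-characteristic argument gives slope $\le 0 = \phi_\epsilon(\T)$). For the $\N[u,v]$ summands I expect semistability to follow from a direct check that subrepresentations of $\N[u,v]$ are sums of shorter wrapped-interval modules whose $\epsilon$-slopes do not exceed $\phi_\epsilon(\N[u,v])$ — this is the analogue of Theorem (A)'s underlying combinatorics but now on the cycle. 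Granting this, group the indecomposable summands of $V$ by their common $\epsilon$-slope: let $s_1 > s_2 > \cdots > s_m$ be the distinct slopes occurring, and let $W^{(k)}$ be the direct sum of all summands of slope $s_k$ (with the $\T[\lambda;w]$ summands absorbed into the block with $s_k = 0$, if it occurs). Each $W^{(k)}$ is semistable of slope $s_k$ by the previous step together with the standard fact that a direct sum of semistable representations of equal slope is semistable.

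Now the uniqueness of the Harder-Narasimhan filtration does the rest: the filtration $V^j := W^{(1)} \oplus \cdots \oplus W^{(j)}$ has semistable quotients $S^j \cong W^{(j)}$ with strictly decreasing slopes $s_1 > \cdots > s_m$, so by uniqueness this is \emph{the} HN filtration along $\epsilon$, and therefore $S^j \cong W^{(j)}$. Reading off dimensions vertex by vertex, $\dim (S^j)_x = \sum \dim \N[u,v]_x$ (resp.\ plus $d^*_\infty$ when $s_j = 0$, accounting for the $\T$ summands each of which contributes dimension $1$ at every vertex), where the sum ranges over the $\N[u,v]$ of slope $s_j$ — which is the claimed formula, once one matches $d^*_{u,v}$ in \eqref{eq:LVdecomp} with the multiplicity of $\N[u,v]$ in $V$ (this identification is exactly how $\cL$ and the decomposition \eqref{eq:LVdecomp} were set up).

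The main obstacle I anticipate is the semistability verification for the $\N[u,v]$ summands, and in particular handling the subrepresentation lattice of a wrapped-interval module carefully: unlike the type $\mathbb{A}_n$ case, a wrapped interval can "overlap itself," so a subrepresentation need not be a subinterval in an obvious way, and one must check that the worst-case slope of a subrepresentation is still bounded by $\phi_\epsilon(\N[u,v])$. A secondary subtlety is the boundary hypothesis "$\phi_\epsilon(S^j)$ nonzero versus zero": one must confirm that there is at most one slope-zero block and that it is precisely the one collecting all $\T[\lambda;w]$ summands together with the slope-zero $\N[u,v]$'s — equivalently, that no $\N[u,v]$ of nonzero slope can be forced into a semistable block with the $\T$ summands. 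This follows from the strict monotonicity of slopes across HN quotients, but it is worth spelling out. The rest is bookkeeping with Euler characteristics and the already-cited interval decomposition theorems.
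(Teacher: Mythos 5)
Your overall strategy matches the paper's almost exactly: compute $\phi_\epsilon$ on the indecomposables, establish that each indecomposable is $\epsilon$-semistable, group summands by slope so that the resulting filtration is forced to be the HN filtration by uniqueness, and finally translate $d^*_{u,v}$ back to multiplicities in $V$ via the lift $\cL$. Your slope computation for $\T[\lambda;w]$ (the $n$ vertices and $n$ edges cancel) and the ensuing semistability of $\T[\lambda;w]$ (all edge maps are isomorphisms, so every nonzero subrepresentation has equal dimension at every vertex and hence slope exactly $0$) are both correct and essentially what the paper does.

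The genuine gap is the semistability of $\N[u,v]$, which you flag yourself but leave as an expectation. Your proposed route --- ``subrepresentations of $\N[u,v]$ are sums of shorter wrapped-interval modules whose slopes do not exceed $\phi_\epsilon(\N[u,v])$'' --- does not describe the actual subobject structure, and if carried out naively it would run into precisely the self-overlap difficulty you identify. The paper closes this gap differently: by the argument of Lemma~\ref{lmm:semi-stable_intervals} (decompose a subobject $U \subset I$ into indecomposables and apply Lemma~\ref{lmm:seq_and_slope}), it suffices to check $\phi_\epsilon(I) \leq \phi_\epsilon(I')$ only for injective morphisms $I \hookrightarrow I'$ \emph{between indecomposables}. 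Since injections restrict injectively along each edge, Remark~\ref{rem:injhed} rules out all but three cases by comparing the numbers $p_{u,v}$ (which count edge maps with nontrivial kernel). The pre-injective case $(++)$ is then handled by lifting to $\UQ$ and applying interval-module rigidity, which forces $v=\bar v$ so that the only such injections are isomorphisms (not ``shorter'' modules); the case $(0-)$ is shown to be vacuous because an infinite bar cannot embed in a direct sum of finite ones; and the pre-projective case $(--)$ is a one-line dimension count using Proposition~\ref{prop:An-tilde-euler}. You would need to supply something like this case analysis --- or an equivalent argument --- to make the semistability step rigorous; the rest of your proposal then goes through as written.
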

\noindent Since ${\cL}V$ is periodic, the multiplicities $d^*_\infty$ and $d^*_{u,v}$ of its indecomposable summands can be computed by restricting to a sufficiently long finite zigzag persistence module via the algorithm of \cite{zigzag_pers} --- see Remark \ref{rmk:mult_zigzgag_compute}. We hope that the results of this paper will encourage not only the use of tools from geometric invariant theory in the study of persistence modules, but also facilitate an influx of techniques from topological data analysis for efficient computation of Harder-Narasimhan filtrations.

\subsection*{Organisation} Sections \ref{sec:prelim}, \ref{sec:HN filtr} and \ref{sec:decompo_type_A} contain notation and preliminary material pertaining to quiver representations, their direct sum decompositions into indecomposable representations, and Harder-Narasimhan filtrations respectively. In Section \ref{sec:euler} we introduce the Euler stability condition, and in Section \ref{sec:HN-An} we prove Theorem ({\bf A}). In Section \ref{sec:An-tilde-unwind} we describe the infinite zigzag persistence modules arising as lifts of $\widetilde{\mathbb{A}}_n$ quiver representations. Finally, Section \ref{sec:HN_of_An_tilde} contains the proof of Theorem ({\bf B}).

\subsection*{Related Work}

The recent work of Kinser \cite{kinser}  classifies {\em totally stable} conditions on type $\mathbb{A}$ quivers, for which every indecomposable is stable, rather than semistable\footnote{In this context, a $Q$-representation $V$ is called {\em stable} with respect to $\alpha:Q_0 \to \R$ if the strict inequality $\phi_\alpha(U) < \phi_\alpha(V)$ holds for every subrepresentation $0 \subsetneq U \subsetneq V$.}. Using such a stability condition instead of $\epsilon$ in Section \ref{sec:HN-An} would allow us to recover the entire barcode at once from the HN filtration (rather than only the intervals with left endpoint $0$). For more complicated quivers, however, the set of totally stable conditions is always empty. In \cite{stable_quiver}, Hille and de la Pe\~na  characterise stable representations of tame quivers when the stability condition is in a neighbourhood of a quantity called the {\em defect}. For $\widetilde{\mathbb A}_n$ quivers, the defect happens to coincide with the Euler slope, and hence their work yields a different proof of our Proposition \ref{prop:ss_An_tilde}. Unlike their argument, ours does not use any knowledge of tame hereditary algebras (besides the list of indecomposable type  $\widetilde{\mathbb A}$ quiver representations). More recently, Apruzzese and Igusa  \cite{igusa} have used a geometric model to determine the maximum finite number of stable indecomposables of type $\widetilde{\mathbb A}$ quivers as the stability condition is varied.  

 The idea of relating indecomposables of a quiver to those of its universal cover, which is exploited heavily in Section \ref{sec:An-tilde-unwind}, dates back to the work of Riedtmann, \cite{cov-ried} Bongartz-Gabriel \cite{cov-bongab}, and Gabriel \cite{cov-gabriel}. An algorithmic perspective on covering theory for representations of strictly alternating $\widetilde{\mathbb{A}}_n$ quivers has been employed in the work of Burghelea and Dey on circle-valued persistence \cite{burghelea}, where indecomposables of the form $\T[\lambda;w]$ are called {\em Jordan cells}. Similarly, Cheng \cite{compute_HN} presents a polynomial time algorithm  to compute Harder-Narasimhan filtrations with respect to any stability condition by relating them to the so-called {\em discrepancies} of quiver representations. Computing discrepancies requires finding the largest $c$ such that a given space of matrices has a {\em $c$-shrunk subspace}. Although this last problem admits  a  polynomial-time algorithm \cite{ivanyos2018constructive}, we are not aware of any practical implementations.

Finally, the work of Henselman and Ghrist \cite{saecular} seeks to generalise persistence barcodes for functors to non-abelian  categories by viewing (ordinary) persistence modules as lattice homomorphisms from a lattice of intervals with respect to a certain partial order. In contrast, the $\epsilon$-slope defines a total preorder on the set of intervals. If the interval modules are all stable (i.e., if the stability condition is totally stable in the sense of  \cite{kinser}), then this preoreder is an order and the HN filtration constitutes a {\em subsaecular series} in the language of \cite{saecular}.

{
\subsection*{Acknowledgements} We thank Fabian Haiden, Emile Jacqard and Frances Kirwan for several helpful conversations. The authors are members of the Centre for Topological Data Analysis, funded by the EPSRC grant EP/R018472/1. VN is partially supported by US AFOSR grant FA9550-22-1-0462.}

\section{Quiver Representation Preliminaries}\label{sec:prelim}

The study of quiver representations is a vast enterprise spanning algebra and geometry, so we will restrict our focus here on the aspects relevant to this work; comprehensive treatments can be found in \cite{kirillov_quiver} and \cite{schiffler2014quiver}.

A \emph{quiver} $Q$ consists of two sets $Q_0$ and $Q_1$, whose elements are called vertices and edges respectively, equipped with two functions $s,t:Q_1 \to Q_0$ called the source and target map. We typically write $e: x \to y$ when indicating that the edge $e$ has {source} $s(e) = x$ and {target} $t(e) = y$. A {\em path} of $Q$ is any nonempty finite sequence of edges $p = (e_1,\ldots,e_k)$ so that $s(e_j) = t(e_{j-1})$ holds across all $1 < j \leq k$. The source and target maps extend to any such $p$ by setting $s(p) = s(e_1)$ and $t(p) = t(e_k)$, and we call $p$ a {\em cycle} of $Q$ whenever the source and target vertex of $p$ are identical. The quiver $Q$ is called {\bf acyclic} if it does not contain any such cycles.

Let us fix, once and for all, a ground field $\K$ so that all vector spaces and linear maps encountered henceforth are understood to be defined over $\K$. A \emph{ representation} $V$ of $Q$ is an assignment of 
  \begin{enumerate}
      \item a vector space $V_x$ to each vertex $x\in Q_0$, and 
      \item a linear map $V_{e}:V_x\to V_y$ to each edge $e:x \to y$ in $Q_1$.
  \end{enumerate}
 Unless stated otherwise, we will assume that both $Q_0$ and  $Q_1$ are finite, and we will only consider finite-dimensional representations of $Q$ --- i.e., each $V_x$ is required to be finite-dimensional over $\K$. The map $\udim_V:Q_0 \to \Z$ sending each $x$ to $\dim V_x$ is called the {\em dimension vector} of $V$. 
  
  A \emph{ morphism} of representations $f:V \to V'$  is a collection of vertex-indexed linear maps $\set{f_x:V_x \to V'_x \mid x \in Q_0}$ so that for each edge $e:x \to y$ in $Q_1$ the evident diagram of vector spaces commutes:
  \[
  \xymatrixcolsep{1in}
  \xymatrixrowsep{.4in}
  \xymatrix{
  V_x \ar@{->}[r]^{f_x} \ar@{->}[d]_{V_e} & V'_x \ar@{->}[d]^{V'_e} \\
  V_y \ar@{->}[r]_{f_y} & V'_y
  }
  \]
  Equivalently, the identity $f_{t(e)} \circ   V_{e}=V_{e}'\circ f_{s(e)}$ holds for every edge $e\in Q_1$. With this definition of morphisms in place, the representations of $Q$ define an abelian category $\Rep(Q)$, see \cite[Section 1.2]{gabriel1972unzerlegbare}. Injective and surjective morphisms, kernels, images, quotients, direct sums and the zero object are all defined pointwise in $\Rep(Q)$. We say that $V$ is a {\em subrepresentation} of another representation $V'$ whenever there exists an injective morphism $V \hookrightarrow V'$, in which case we simply write $V \subset V'$. 
  
\begin{rmk}  The quiver $Q$ is a {\em subquiver} of another quiver $Q'=(s',t':Q_1' \to Q_0')$ if we have $Q_0\subset Q_0'$ and $Q_1\subset Q_1'$ so that $s$ and $t$ are restrictions of $s'$ and $t'$ respectively. Given such a subquiver, we note that each representation $V'$ of $Q'$ automatically induces a representation $V$ of $Q$ by restricting to the available vertices and edges. Namely, $V_x := V'_x$ for all $x\in Q_0$ and $V_{e} := V'_{e}$ for all $e\in Q_1$.
\end{rmk}
  
  \section{Harder-Narasimhan Filtrations} \label{sec:HN filtr}
  
Harder-Narasimhan filtrations were originally introduced for the purpose of studying moduli spaces of vector bundles over algebraic curves \cite{Harder1974}; they have since been employed in a plethora of other contexts \cite{king1994moduli, bridgeland2007stability, haiden2020semistability}, including geometric invariant theory \cite[Chapter 4.2]{gitbook}. 
 
The {\em Grothendieck group} $K(\cA)$ of an abelian category $\cA$ is the abelian group generated by the objects of $\cA$ subject to the relation that $V = U + W$ whenever there is an exact sequence $0 \to U \to V \to W \to 0$. By a {\bf stability condition} on $\cA$ we mean an abelian group homomorphism $Z:K(\cA) \to (\C,+)$ valued in the (additive) complex numbers, so that every nonzero object $U$ is mapped to the right half plane, \ie $\Real Z(U) > 0$. Given such a stability condition, the \emph{$Z$- slope} of a nonzero object $V$ in $\cA$ is the real number
\[
\phi_Z(V):=\frac{\Imag  Z(V)}{\Real Z(V)}.
\]

\begin{defi} Let $Z$ be a stability condition on $\mathcal{A}$. A nonzero object $V$ in $\mathcal A$  is {\bf $Z$- semistable} if its nonzero subobjects have smaller slopes. In other words, $\phi_Z(U)\leqslant \phi_Z(V)$ holds whenever there exists an injective morphism $U \hookrightarrow V$ in $\mathcal{A}$ with $U \neq 0$. 
\end{defi}

The importance of semistable objects in the study of moduli spaces stems from the fact that every nonzero object admits a unique filtration for which the successive quotients are semistable and have strictly decreasing slopes. A proof of the following result (for the category of representations of a fixed quiver) can be found in \cite[Theorem 2.5]{stable_quiver}; more generally, see \cite[Theorem 4.2]{haiden2020semistability}.

\begin{thm}\label{thm:HN_filtration}
Let $\cA$ be a finite length (i.e., Noetherian and Artinian) abelian category equipped with a stability condition $Z:K(\cA) \to (\C,+)$. Every object $V \neq 0$ of $\cA$ admits a unique filtration $V^\bullet := {\bf HN}_\alpha^\bullet(V)$: 
\[
0 = V^0 \subsetneq V^1 \subsetneq \cdots \subsetneq V^\ell = V
\]
whose successive quotients $V^j/V^{j-1}$ are all $\alpha$-semistable with strictly decreasing $\alpha$-slopes, i.e., 
\[
\phi_Z(V^{j}/V^{j-1}) > \phi_Z(V^{j+1}/V^{j}). 
\] 
(This unique $V^\bullet$ is called the {\bf Harder-Narasimhan}, or {\bf HN}, filtration of $V$ along $Z$.)
\end{thm}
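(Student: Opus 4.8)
The goal is to prove Theorem~\ref{thm:HN_filtration}: existence and uniqueness of the Harder--Narasimhan filtration in a finite length abelian category $\cA$ equipped with a stability condition $Z$. Since the excerpt says a proof can be found in cited references but presents the statement as something to establish, I will sketch the standard argument adapted to this setting.

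\medskip

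\noindent\textbf{Proof proposal.}

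The plan is to first establish two structural facts about $Z$-semistable objects, then build the filtration inductively using a ``maximal destabilising subobject'' and prove uniqueness by a slope-comparison argument.

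\emph{Step 1 (Boundedness of slopes and existence of a maximal destabilising subobject).} First I would observe that because $\cA$ is finite length, every nonzero $V$ has only finitely many composition factors, and $Z$ takes each of these into the open right half-plane; I would use this to show that the set of slopes $\{\phi_Z(U) : 0 \neq U \subseteq V\}$ is bounded above. More precisely, writing $Z(U) = \Real Z(U) + i\,\Imag Z(U)$, additivity of $Z$ on short exact sequences together with $\Real Z > 0$ on nonzero objects lets me bound $\Real Z(U)$ below by the minimum real part among simple objects and $\Imag Z(U)$ above by a finite sum over composition factors, yielding a uniform upper bound $\phi_{\max}(V)$ on slopes of subobjects. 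Next, among all nonzero $U \subseteq V$ achieving (or approaching) this supremum, I want a genuine maximiser: since $\cA$ is Noetherian, an ascending chain argument shows that a subobject of maximal slope exists, and among those of maximal slope one of maximal length (using Artinian-ness, or rather finite length) exists; call it $U_{\max}$. I would then prove $U_{\max}$ is $Z$-semistable (any subobject of strictly larger slope would contradict maximality of $\phi_Z(U_{\max})$) and that it is the \emph{unique} maximal-slope subobject of that slope — if $U, U'$ both had slope $\phi_{\max}$, then $U + U'$ is a quotient of $U \oplus U'$, hence (by additivity of $Z$ and a short computation with the ``mediant inequality'' for slopes) has slope $\geq \phi_{\max}$, forcing equality and $U + U' = U_{\max}$ by length maximality; this gives uniqueness of $U_{\max}$.

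\emph{Step 2 (Induction to build the filtration).} Set $V^1 := U_{\max}(V)$, the maximal destabilising subobject. It is semistable by Step~1. Now pass to $V/V^1$, which has strictly smaller length, and by induction it admits an HN filtration $0 = W^0 \subsetneq W^1 \subsetneq \cdots \subsetneq W^{m} = V/V^1$ with semistable quotients of strictly decreasing slopes. Pulling back along the quotient map $V \twoheadrightarrow V/V^1$ gives a filtration $V^1 \subsetneq V^2 \subsetneq \cdots \subsetneq V^{\ell} = V$ with $V^{j}/V^{j-1} \cong W^{j-1}/W^{j-2}$ semistable for $j \geq 2$. It remains to check the slope inequality $\phi_Z(V^1) > \phi_Z(V^2/V^1) = \phi_Z(W^1)$: since $W^1 \subseteq V/V^1$, the maximality of $\phi_Z(V^1)$ among \emph{all} subobjects of $V$ would need to be compared with the slope of $W^1$; here the key point is that $\phi_Z(W^1) \leq \phi_{\max}(V/V^1) < \phi_Z(V^1)$, where the strict inequality holds because any subobject of $V/V^1$ of slope $\geq \phi_Z(V^1)$ would lift to a subobject of $V$ strictly containing $V^1$ with slope $\geq \phi_Z(V^1)$ (using additivity and the mediant inequality), contradicting that $V^1$ is the \emph{largest} subobject of maximal slope. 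This closes the induction and yields existence.

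\emph{Step 3 (Uniqueness).} Suppose $V^\bullet$ and $\widetilde V^\bullet$ are two HN filtrations with semistable quotients $S^j = V^j/V^{j-1}$ and $\widetilde S^k = \widetilde V^k / \widetilde V^{k-1}$. I would show $V^1 = \widetilde V^1$: consider the composite $V^1 \hookrightarrow V \twoheadrightarrow V/\widetilde V^{k-1}$ for the smallest $k$ with nonzero image; the image lies in $\widetilde S^{k} = \widetilde V^{k}/\widetilde V^{k-1}$ (a quotient of $V^1$, hence slope $\geq \phi_Z(V^1)$ by semistability of $V^1$... actually $\leq$ from the target side), and semistability of $\widetilde S^k$ forces $\phi_Z(V^1) \leq \phi_Z(\widetilde S^k) \leq \phi_Z(\widetilde S^1) = \phi_Z(\widetilde V^1)$; by symmetry $\phi_Z(\widetilde V^1) \leq \phi_Z(V^1)$, so the first slopes agree, and then length/maximality considerations from Step~1 give $V^1 = \widetilde V^1$. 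Quotienting by $V^1 = \widetilde V^1$ and inducting on length finishes uniqueness.

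\emph{Main obstacle.} The technical heart is Step~1: proving that a genuinely \emph{maximal} destabilising subobject exists and is unique. The existence of the supremum of slopes needs the finite length hypothesis in an essential way (in general abelian categories this can fail), and the uniqueness/maximality argument rests on the ``mediant'' or ``seesaw'' inequality $\min(\phi_Z(A), \phi_Z(B)) \leq \phi_Z(A+B) \leq \max(\phi_Z(A), \phi_Z(B))$ for subobjects $A, B$ of a common object, which I would derive carefully from additivity of $Z$ on the short exact sequence $0 \to A \cap B \to A \oplus B \to A + B \to 0$ together with $\Real Z > 0$. Everything downstream (Steps 2 and 3) is then a fairly mechanical induction on length, so I would allocate most of the care to nailing down Step~1.
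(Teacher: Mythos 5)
The paper does not supply its own proof of this theorem; it cites \cite[Theorem 2.5]{stable_quiver} and \cite[Theorem 4.2]{haiden2020semistability} and moves on. So there is no in-paper argument to compare against, but your outline follows the standard HN strategy (maximal destabilising subobject, induction on length, uniqueness via slope comparison between semistable quotients), and Steps 2 and 3 are essentially right. Step 3 is a bit garbled — the relevant index is the smallest $k$ with $V^1 \subseteq \widetilde V^k$, so that $V^1 \to \widetilde S^k$ is a nonzero map from a semistable object of slope $\phi_Z(V^1)$ into a semistable object of slope $\phi_Z(\widetilde S^k)$, forcing $\phi_Z(V^1) \le \phi_Z(\widetilde S^k) \le \phi_Z(\widetilde S^1)$ and then, by strict decrease, $k=1$ — but the intended argument is the correct one.

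There is, however, a genuine gap in Step 1, the part you yourself identified as the technical heart. The claim that ``since $\cA$ is Noetherian, an ascending chain argument shows that a subobject of maximal slope exists'' does not hold up: the ascending chain condition governs chains under inclusion, while $\phi_Z$ is not monotone with respect to inclusion, so Noetherianity gives no reason the supremum of slopes over subobjects is attained. (A priori one could have pairwise unrelated subobjects $U_1, U_2, \ldots$ with $\phi_Z(U_i)$ strictly increasing toward the sup.) The correct use of the finite length hypothesis is more direct: $V$ has finitely many composition factors, $Z$ is additive on short exact sequences, and the composition factors of any $U \subseteq V$ form a sub-multiset of those of $V$; hence $Z(U)$ ranges over a finite set, so the set $\{\phi_Z(U) : 0 \neq U \subseteq V\}$ is finite and the maximum is attained. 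This finiteness-of-$Z$-values lemma is the missing ingredient that actually makes finite length do work, and it should replace the ACC appeal. With it in place, the rest of Step 1 (choosing a maximal-length subobject among those of maximal slope, and uniqueness of $U_{\max}$ via the mediant/seesaw inequality applied to $U + U'$) is sound, and the remainder of your argument goes through.
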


\begin{rmk}\label{rem:hnss}
 In particular, it follows from uniqueness that a nonzero object $V$ of $\cA$ is $Z$-semistable if and only if the corresponding HN filtration is the trivial one $0 \subsetneq V$.
\end{rmk}

Let $\Hom(Q_0,\Z)$ be the abelian group consisting of functions $Q_0 \to \Z$ with addition defined vertex-wise.
When $Q$ is acyclic, the map $V \mapsto \udim_V$ furnishes an isomorphism $K(\Rep(Q)) \simeq \Hom(Q_0,\Z)$  --- see for instance \cite[Theorem 1.15]{kirillov_quiver}. As a result, every stability condition on $\Rep(Q)$ amounts to a function $\beta:Q_0 \to \mathbb{C}$ sending vertices $x$ to complex numbers $\beta_x$, subject to the requirement that $\Real \beta_x > 0$. It is customary to assign numbers of the form $\beta_x = 1+i \cdot \alpha_x$ for arbitrary real numbers $\alpha_x$ \cite{stable_quiver,Reineke_2003}, which makes no difference to semistability (but is liable to alter HN filtrations). Thus, the stability conditions of interest are precisely the functions $\alpha:Q_0 \to \mathbb{R}$, and for such a function the corresponding $\alpha$-slope of $V \in \Rep(Q)$ equals
\[
\phi_\alpha(V)= \frac{\sum_{x \in Q_0} \alpha_x \cdot \dim V_x}{\sum_{x \in Q_0} \dim V_x}.
\]

Crucially, the uniqueness result of Theorem \ref{thm:HN_filtration} applies only after the stability condition has been fixed --- in particular, varying $\alpha:Q_0 \to \mathbb{R}$ is liable to produce a very different HN filtration of the same representation. If $\alpha$ is identically zero for instance, then all quiver representations are semistable with slope $0$, and hence have trivial HN filtrations. Our focus here will be on a specific choice of stability condition $\epsilon$ arising from cellular sheaf cohomology, which is described in Section \ref{sec:euler}. Throughout the remainder of this section, however, we fix an arbitrary $\alpha$ and all instances of slopes, stability and HN filtrations encountered here are with respect to this fixed $\alpha$. 

The following result was established for vector bundles in \cite[\S  2]{HN_vect_bundle}; the version stated below is \cite[Lemma 2.2]{Reineke_2003}.

\begin{lmm}\label{lmm:seq_and_slope}
The following properties hold for every exact sequence 
\[
0\rightarrow U\rightarrow V\rightarrow W\rightarrow 0
\] in $\Rep(Q)$ with $U,V$ and $W$ all nonzero:
\begin{enumerate}
    \item $\min(\phi(U),\phi(V))\leqslant\phi(X)\leqslant \max(\phi(U),\phi(V))$; and moreover,
    \item if $\phi(U)=\phi(V)$ with $U$ and $V$ semistable, then $X$ is also semistable.
\end{enumerate}
\end{lmm}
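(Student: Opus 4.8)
The plan is to reduce both assertions to the elementary behaviour of \emph{mediant} fractions --- the classical ``seesaw'' property of slopes. The starting point is that exactness in $\Rep(Q)$ is computed vertexwise, so the dimension vectors of a short exact sequence $0 \to U \to V \to W \to 0$ satisfy $\udim_V = \udim_U + \udim_W$. Pairing this identity with $\alpha$ and with the constant function $1$ yields two scalar relations, which I will write as $B = A + C$ and $b = a + c$, where $A = \sum_x \alpha_x \dim U_x$, $a = \sum_x \dim U_x$, and similarly for $V, W$; here $a,b,c$ are strictly positive because $U,V,W$ are nonzero. Thus $\phi(U) = A/a$, $\phi(V) = B/b = (A+C)/(a+c)$ and $\phi(W) = C/c$, so that $\phi(V)$ is literally the mediant of $\phi(U)$ and $\phi(W)$.

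For part (1) I would simply compute the two differences. A one-line manipulation with common denominators shows that $\phi(U) - \phi(V)$ and $\phi(V) - \phi(W)$ both equal $Ac - aC$ multiplied by a strictly positive factor (respectively $\tfrac{1}{a(a+c)}$ and $\tfrac{1}{c(a+c)}$). Hence these two differences share a sign, so exactly one of the following holds: $\phi(U) > \phi(V) > \phi(W)$, or $\phi(U) < \phi(V) < \phi(W)$, or all three slopes coincide. In every case each of the three slopes lies (weakly) between the other two, which in particular gives $\min(\phi(U),\phi(V)) \leqslant \phi(W) \leqslant \max(\phi(U),\phi(V))$, and also shows that if any two of the three slopes agree then so does the third.

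For part (2), assume $\phi(U) = \phi(V)$; by part (1) this forces $\phi(U) = \phi(V) = \phi(W) =: \phi_0$. To verify that $W$ is semistable, take an arbitrary nonzero subrepresentation $W' \subseteq W$ and let $V' \subseteq V$ be its preimage under the surjection $V \twoheadrightarrow W$; since all of these operations are vertexwise, $V'$ is a genuine subrepresentation containing $U$ with $V'/U \cong W'$, so $0 \to U \to V' \to W' \to 0$ is exact. If $V' = V$ then $W' = W$ and $\phi(W') = \phi_0$; otherwise $V'$ is a nonzero proper subrepresentation of $V$, so semistability of $V$ gives $\phi(V') \leqslant \phi_0$, and applying part (1) to the new sequence bounds $\phi(W')$ above by $\max(\phi(U),\phi(V')) = \phi_0 = \phi(W)$. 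Either way $\phi(W') \leqslant \phi(W)$, so $W$ is semistable. (If one instead reads the hypothesis in the classical form ``$\phi(U) = \phi(W)$ with $U,W$ semistable, hence $V$ semistable'', the same strategy works: given $0 \neq V' \subseteq V$, set $U' = V' \cap U$ and $W' = \Imag(V' \to W)$, bound $\phi(U') \leqslant \phi_0$ and $\phi(W') \leqslant \phi_0$ using semistability of $U$ and $W$, and conclude $\phi(V') \leqslant \phi_0$ from part (1), treating the degenerate cases $U' = 0$ and $W' = 0$ separately.)

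I do not anticipate a real obstacle. The only points that need mild care are: bookkeeping the nonzero/proper cases so that part (1) is invoked only on short exact sequences whose three terms are all nonzero; and the (immediate) observation that preimages, intersections and quotients used above are honest subrepresentations, since every construction in $\Rep(Q)$ is carried out vertexwise.
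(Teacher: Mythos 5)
The paper does not actually prove this lemma; it cites Reineke's \emph{Lemma 2.2} from \cite{Reineke_2003}. So there is no in-house argument to compare against, but your route (dimension-vector additivity on short exact sequences, then the mediant/seesaw trichotomy for the slopes) is exactly the standard one, and it is the right idea.

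That said, two things need fixing. First, the sentence ``in every case each of the three slopes lies (weakly) between the other two'' is false: in the non-degenerate cases $\phi(U)>\phi(V)>\phi(W)$ or $\phi(U)<\phi(V)<\phi(W)$, the outer slopes are not sandwiched between the other two --- only the \emph{middle} term's slope $\phi(V)$ is. The fact that all three agree as soon as two of them do is correct, but it does not yield the conclusion you then write down. Second, and relatedly, the lemma as printed is garbled (the letter $X$ never appears in the short exact sequence, and the roles of $V$ and $W$ are swapped); the content actually needed --- and used in the proof of Proposition~\ref{prop:direct_sum_HN} --- is the classical form: $\min(\phi(U),\phi(W))\leq\phi(V)\leq\max(\phi(U),\phi(W))$, and if $\phi(U)=\phi(W)$ with $U,W$ semistable then $V$ is semistable. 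Your parenthetical at the end shows you recognised this and the sketch there (intersect a nonzero $V'\subseteq V$ with $U$, push to $W$, handle $U'=0$ and $W'=0$ separately, invoke semistability of $U$ and $W$ plus the seesaw) is the correct proof of part (2). I would simply promote that parenthetical to the main argument, replace the false ``each of the three'' claim with the accurate statement that $\phi(V)$ is the slope lying between the other two, and drop the typo-matching rendition of part (1).

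Everything else --- that $\Rep(Q)$ is exact vertexwise, that the preimage/intersection/image constructions are genuine subrepresentations, and the algebra showing $\phi(U)-\phi(V)$ and $\phi(V)-\phi(W)$ carry the same sign --- is sound and is precisely what a written-out proof of Reineke's lemma should contain.
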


\noindent It will be convenient for our purposes to describe the HN filtrations of direct sums in terms of the HN filtrations of constituent factors. To this end, given any representation $V$ in $\Rep(Q)$, we consider the HN filtration $V^\bullet$ as being indexed over the real line in the following manner. The {\em HN $\R$-filtration} of $V$ is the assignment $t \mapsto V_\R(t)$ of quiver representations to real numbers obtained by setting
\[
V_\R(t) := V^{\lfloor t \rfloor_\phi};
\]
here $\lfloor t \rfloor_\phi$ is the smallest integer $i \geq 0$ satisfying $t \geq \phi(V^{i+1}/V^{i})$, as illustrated below:
 \[
\footnotesize\begin{tikzpicture}[
roundfillnode/.style={circle,fill, inner sep=1.25pt},scale=1,thick,->]
    \node[label=$V^2$] (0) at (0.5,0) {};
         \node[roundfillnode,label=below:$\phi(V^2/V^1)$]   at (2,0) {};
    \node[label=$V^1$]  (4) at (4,0) {};
             \node[roundfillnode,label=below:$\phi(V^1/V^0)$]   at (6,0) {};
     \node[label=$ V_\R(t)$]  (end) at (10,0) {};
     \node[label=below:$t$]  (end2) at (10,0.) {};
         \node[label=$V^0$] (0) at (8,0) {};
      \draw (-1,0.) to  (10,0.);  
            \node  at (-1.5,0.) {$\dots$}; 
        \end{tikzpicture}
\]
Given $s \geq t$ in $\R$, we have $\lfloor s \rfloor_\phi \leq \lfloor t \rfloor_\phi$, so there is an obvious inclusion $V_\R(t) \hookrightarrow V_\R(s)$. Thus, $V_\R$ constitutes a functor from the $\geq$-ordered set of real numbers to the category $\Rep(Q)$. There is a natural direct sum operation on such functors, where $[U_\R \oplus V_\R](t)$ equals $U_\R(t) \oplus V_\R(t)$ for each $t \in \R$. 

The following result is  well-known to experts, but we were unable to find it in the literature and have included a proof for completeness.

\begin{prop} \label{prop:direct_sum_HN} For nonzero $U$ and $V$ in $\Rep(Q)$, we have $(U \oplus V)_\R = U_\R \oplus V_\R$.
\end{prop}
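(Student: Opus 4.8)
The plan is to show that the HN $\R$-filtration of $U \oplus V$ coincides with the direct sum functor $U_\R \oplus V_\R$ by verifying that the latter has the defining properties of the former, namely that each of its jumps is semistable and the slopes of successive quotients strictly decrease. First I would observe that $t \mapsto [U_\R \oplus V_\R](t)$ is a functor from $(\R,\geq)$ to $\Rep(Q)$ which is eventually $0$ (for $t$ very large) and eventually all of $U \oplus V$ (for $t$ very small), with only finitely many jump values, namely the union of the slopes $\phi(U^j/U^{j-1})$ and $\phi(V^k/V^{k-1})$. So it does arise from an honest finite filtration of $U \oplus V$; it remains to identify this filtration with ${\bf HN}^\bullet_\alpha(U \oplus V)$, and by the uniqueness clause of Theorem \ref{thm:HN_filtration} it suffices to check the two characterising properties.

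The key computation is to understand a single successive quotient of the direct-sum filtration at a jump value $s$. Writing $U_\R(s^-)$ for the value just below the jump and $U_\R(s)$ for the value at the jump, one has a pointwise (hence in $\Rep(Q)$) short exact sequence
\[
0 \to [U_\R \oplus V_\R](s) \to [U_\R \oplus V_\R](s^-) \to \left(U_\R(s^-)/U_\R(s)\right) \oplus \left(V_\R(s^-)/V_\R(s)\right) \to 0,
\]
so the successive quotient at $s$ is the direct sum of the successive quotients of $U_\R$ and $V_\R$ at $s$ (one of which may be zero, if only one of $U$, $V$ jumps at $s$). Each nonzero summand here is a successive quotient of an actual HN filtration, hence $\alpha$-semistable, and each has $\alpha$-slope exactly $s$: for $U$ this is because $s$ is by definition the slope $\phi(U^{i+1}/U^i)$ of the quotient that appears, and likewise for $V$. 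By part (2) of Lemma \ref{lmm:seq_and_slope}, a direct sum (which fits in an exact sequence $0 \to A \to A \oplus B \to B \to 0$) of two semistable representations of equal slope $s$ is again semistable of slope $s$. Hence every successive quotient of the direct-sum filtration is $\alpha$-semistable, and its slope is the corresponding jump value $s$. Since the jump values of a real-indexed filtration are strictly decreasing as we pass from one jump to the next (they are distinct reals listed in decreasing order along the $t$-axis), the slopes of the successive quotients strictly decrease.

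Therefore the filtration underlying $U_\R \oplus V_\R$ satisfies both defining properties of the HN filtration of $U \oplus V$, and uniqueness in Theorem \ref{thm:HN_filtration} forces $(U \oplus V)_\R = U_\R \oplus V_\R$. The main obstacle — really the only subtle point — is bookkeeping the jump values correctly: one must make sure that when $U$ and $V$ jump at the \emph{same} value $s$ the two successive quotients genuinely get added (rather than producing two separate filtration steps at the same slope, which would violate strict decrease), and that the reindexing $\lfloor t \rfloor_\phi$ is compatible across the direct sum. This is exactly where the $\R$-indexed formalism pays off: it automatically merges simultaneous jumps, so once the exact-sequence argument above is in place there is nothing further to reconcile. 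A small degenerate case to mention is when $U$ or $V$ is itself $\alpha$-semistable, so that its $\R$-filtration has a single jump; the argument goes through verbatim.
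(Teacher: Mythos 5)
Your proof is correct and follows essentially the same approach as the paper's: both define the candidate filtration by evaluating $U_\R \oplus V_\R$ at the finitely many jump slopes, identify each successive quotient as the direct sum of the corresponding quotients from $U^\bullet$ and $V^\bullet$, apply Lemma~\ref{lmm:seq_and_slope}(2) to conclude semistability of each quotient with the correct slope, and then invoke the uniqueness clause of Theorem~\ref{thm:HN_filtration}.
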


\begin{proof}
Writing $\ell$ and $m$ for the lengths of $U^\bullet$ and $V^\bullet$ respectively, let 
\[
\Phi := \left\{\phi(U^i/U^{i-1})\right\}_{1 \leq i \leq \ell} \cup \left\{\phi(V^{j}/V^{j-1})\right\}_{1 \leq j \leq m}
\]
be the union of slopes attained by the successive quotients of both filtrations. By construction, $(U \oplus V)_\R$ is locally constant on $\R - \Phi$. Let $\theta_1 > \ldots > \theta_n$ be the slopes in $\Phi$ arranged in strictly decreasing order, and consider the filtration $W^\bullet$ of $U \oplus V$ given by $W^k := [U_\R \oplus V_\R](\theta_k)$ for each $1 \leq k \leq m$. We now claim that $W^\bullet$ is the HN filtration of $(U \oplus V)$. To see this, note that we have an isomorphism of quotient representations
\begin{align}\label{eq:hnsum}
\frac{W^k} {W^{k-1}} \simeq 
\frac{U_\R(\theta_k)}{U_\R(\theta_{k-1})} \oplus \frac{V_\R(\theta_k)}{V_\R(\theta_{k-1})}
\end{align}
for each $k$. We will now show that $W^k/W^{k-1}$ is semistable with slope $\theta_k$, which -- when combined with the uniqueness guarantee of Theorem \ref{thm:HN_filtration} -- produces the desired result. By \eqref{eq:hnsum}, we have a (split) exact sequence of the form  
\[
0 \to \frac{U_\R(\theta_k)}{U_\R(\theta_{k-1})} \hookrightarrow \frac{W^k}{W^{k-1}} \twoheadrightarrow \frac{V_\R(\theta_k)}{V_\R(\theta_{k-1})} \to 0
\]
The two summands on the right side of \eqref{eq:hnsum} are either trivial or semistable with slope $\theta_k$ by construction. In the nontrivial case, the first assertion of Lemma \ref{lmm:seq_and_slope} ensures that the slope of $W^k/W^{k-1}$ is also $\theta_k$, while the second assertion guarantees semistability. 
\end{proof}

\section{Indecomposables and Barcodes}\label{sec:decompo_type_A}

A representation $V \neq 0$ of a quiver $Q$ is \emph{indecomposable} if it cannot be written as a direct sum of two nontrivial representations. Since we have assumed that $Q$ is finite and $V$ is finite-dimensional, the  Krull-Schmidt theorem \cite{krull2, krull} applies, so there is a unique pair consisting of a finite set $\text{Ind}(V)$ of indecomposable representations and a function $\text{Ind}(V) \to \mathbb{N}_{>0}$ denoted $I \mapsto d_I$ that satisfies
\[
V = \hspace{-.2em}\bigoplus_{I \in \text{Ind}(V)} \hspace{-.2em} I^{d_I}.
\]
It follows from Proposition \ref{prop:direct_sum_HN} that the HN filtrations of all $I$ in $\text{Ind}(V)$ determine the HN filtration of $V$, so in principle it suffices to restrict attention to indecomposable representations. Unfortunately, the task of decomposing a given $V \in \Rep(Q)$ into indecomposables, and the task of cataloguing all possible indecomposables in $\Rep(Q)$ are both remarkably difficult problems for arbitrary $Q$. Two prominent exceptions are the Dynkin quivers of type $\mathbb{A}$ and $\widetilde{\mathbb{A}}$, whose indecomposables we describe below.

\subsection{Quivers of type $\mathbb{A}$} \label{sec:An_decomp} Gabriel's seminal result \cite{gabriel1972unzerlegbare} established that $\Rep(Q)$ is {\em representation finite}\footnote{Representation finiteness of $Q$ means that there are only finitely many indecomposables in $\Rep(Q)$ which attain a given dimension vector.} if and only if the undirected graph obtained from $Q$ by ignoring source and target information is a disjoint union of simply-laced Dynkin diagrams  (i.e., of types $\mathbb{A}, \mathbb{D}$ and $\mathbb{E}$). In particular, we recall that a quiver $Q$ is said to be of {\em type $\mathbb{A}_n$} for some $n \geq 0$ whenever its underlying graph is:
\[
\xymatrixcolsep{.6in}
\xymatrix{
x_0 \ar@{-}[r]^-{e_1} & x_1 \ar@{-}[r]^-{e_2} & \cdots \ar@{-}[r]^-{e_{n-1}} & x_{n-1}. 
}
\]
Representations of type $\mathbb{A}_n$ quivers are also called {\bf zigzag persistence modules} \cite{zigzag_pers}. If $s(e_i) = x_{i-1}$ and $t(e_i) = x_i$ holds for each $i$, then $Q$ is called {\em equioriented}, and its representations are (ordinary) {\bf persistence modules}  \cite{zomorodian2004computing}. 

\begin{defi} \label{def:interval_modules} Let $Q$ be a quiver of type $\mathbb{A}_n$ and consider a subinterval $[u,v] \subset [0,n-1]$ with $u$ and $v$ integers. The {\bf interval module} $\I[u,v]$ is the representation of $Q$ defined as follows. The vector spaces assigned to vertices are 
\[\I[u,v]_{x_i}=\begin{cases}\K &\text{ if } i \in [u,v]\\ 0&\text{ otherwise.}\end{cases}
\]
The linear map $\I[u,v]_{e_j}$ assigned to the edge $e_j$ is the identity $\id_\K$ whenever both source and target vector spaces are nontrivial, and is necessarily  zero otherwise.
\end{defi}

The following result is a direct consequence of Gabriel's theorem \cite{gabriel1972unzerlegbare}.

\begin{thm}\label{thm:gabriel_An} Let $Q$ be a type $\mathbb{A}_n$ quiver. Then up to isomorphism, the indecomposables of $\Rep(Q)$ are precisely the interval modules $\I[u,v]$ for $[u,v] \subset [0,n-1]$.
\end{thm}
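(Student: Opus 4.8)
The statement to prove is Theorem \ref{thm:gabriel_An}: the indecomposables of a type $\mathbb{A}_n$ quiver are exactly the interval modules $\I[u,v]$.

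The plan is to derive this from Gabriel's theorem, which was quoted just above: $\Rep(Q)$ for a type $\mathbb{A}_n$ quiver is representation finite, and moreover Gabriel's theorem identifies the indecomposables with the positive roots of the associated root system (the $A_n$ root system). So the proof has two halves: (1) verify that each $\I[u,v]$ is indecomposable, and (2) verify that these are all of them, by a counting/root-system argument or by a direct classification.

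\textbf{Step 1: Each $\I[u,v]$ is indecomposable.} I would compute $\operatorname{End}(\I[u,v])$ and show it is a local ring (equivalently, here, that it equals $\K$). An endomorphism $f$ is a tuple of scalars $f_{x_i} \in \K$ for $i \in [u,v]$ (and forced zero outside), and the commutation relation across each internal edge $e_j$ (whose map is $\id_\K$, regardless of orientation) forces $f_{x_{j-1}} = f_{x_j}$. Since the subinterval $[u,v]$ is connected, all these scalars agree, so $\operatorname{End}(\I[u,v]) \cong \K$, which has no nontrivial idempotents; hence $\I[u,v]$ is indecomposable. Alternatively one can argue directly: any direct sum decomposition would split the (one-dimensional) space at each vertex compatibly, and connectivity of the support forces one summand to be everything.

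\textbf{Step 2: These are all the indecomposables.} I would invoke Gabriel's theorem in its precise form: for $Q$ of Dynkin type, $V \mapsto \udim_V$ gives a bijection between isomorphism classes of indecomposables and positive roots of the corresponding root system. For type $A_n$, the positive roots are exactly the vectors $\sum_{i=u}^{v} \alpha_i$, i.e. the $0/1$-vectors supported on a contiguous interval $[u,v] \subset [0,n-1]$. Each such dimension vector is realized by $\I[u,v]$, which is indecomposable by Step 1; and since indecomposables are determined up to isomorphism by their dimension vector (again Gabriel), every indecomposable is isomorphic to some $\I[u,v]$.

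If instead one wants to avoid citing the root-system refinement of Gabriel's theorem and use only "representation finite", I would argue by induction on $n$ (equivalently on $\sum_x \dim V_x$), using the source/sink reflection functors or a direct argument: pick a sink $x$ of $Q$; if $V_x$ has a direct summand not hit by the incoming edge one can peel off a copy of $\I[x,x]$; otherwise restrict to the subquiver omitting $x$ and lift back. The bookkeeping of these cases is the main obstacle in that route, since one must handle both possible orientations of each edge and track how supports behave. Given that the paper explicitly frames Theorem \ref{thm:gabriel_An} as "a direct consequence of Gabriel's theorem," the cleanest proof is Step 1 plus the root-theoretic form of Step 2, and I expect the only real content to be the short endomorphism-ring computation in Step 1.
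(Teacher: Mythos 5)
Your proposal is correct and takes the same route the paper intends: the paper gives no proof at all, simply stating the result is "a direct consequence of Gabriel's theorem," which is exactly the root-theoretic form you invoke in Step 2 (positive roots of $A_n$ are the $0/1$-vectors supported on contiguous intervals, and dimension vectors classify indecomposables for Dynkin quivers). Your Step 1 endomorphism-ring computation is a correct and standard way to verify that each $\I[u,v]$ realizes its root as an indecomposable, filling in the detail the paper leaves implicit.
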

 It follows directly from the above result that for every zigzag persistence module $V$ there exists a unique finite set $\barc(V)$ containing subintervals of $[0,n-1]$ and a unique function $\barc(V) \to \mathbb{N}_{>0}$ denoted $[u,v] \mapsto d_{uv}$ which satisfy the following property: there is an isomorphism
\begin{align}\label{eqn:decompo_An}
V \simeq \hspace{-.3em}\bigoplus_{[u,v] \in \barc(V)}\hspace{-.35em} \I[u,v]^{d_{uv}}
\end{align}
in $\Rep(Q)$. This set $\barc(V)$ is called the {\bf barcode} of $V$ while $[u,v] \mapsto d_{uv}$ is called the {\bf multiplicity} function. There are practical algorithms which can compute both barcodes and multiplicities for zigzag persistence modules --- see \cite{zigzag_pers}.

In this paper we will also be interested in representations of type $\mathbb{A}_\infty$ quivers; the underlying graph of any such quiver $Q$ has vertices indexed by $\Z$ as depicted below:
\[
\xymatrixcolsep{.6in}
\xymatrix{
\cdots \ar@{-}[r]^{e_{-1}} & x_{-1} \ar@{-}[r]^{e_{0}} & x_0 \ar@{-}[r]^{e_{1}} & x_1 \ar@{-}[r]^{e_{2}} & \cdots
}
\]
Interval modules $\I[u,v]$ in $\Rep(Q)$ are described as in Definition \ref{def:interval_modules}, except that in addition to the usual subintervals $[u,v] \subset \Z$, one also allows infinite intervals with $u = -\infty$ or $v=\infty$ or both. The following result summarises \cite[Theorem 1.6 and Theorem 1.7]{infinite_zigzag}.

\begin{thm}\label{thm:A-inf-indec}
Let $Q$ be a type $\mathbb{A}_\infty$ quiver. Then, 
\begin{enumerate}
    \item the only indecomposable objects in $\Rep(Q)$ are the interval modules; and moreover, 
    \item every $V \in \Rep(Q)$ satisfying $\dim V_x < \infty$ for all $x \in Q_0$ admits a unique (and not necessarily finite) set $\barc(V)$ consisting of subintervals $[u,v] \subset \Z \cup \set{\pm \infty}$ and a multiplicity function $d:\barc(V) \to \Z_{>0}$ so that the decomposition
\[
V \simeq \bigoplus_{[u,v] \in \barc(V)} \I[u,v]^{d_{uv}}
\]
holds in $\Rep(Q)$. 
\end{enumerate}
\end{thm}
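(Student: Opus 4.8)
For existence I would deduce the ``no other indecomposables'' half of~(1) from the decomposition statement in~(2), so I will prove the latter first and dispose of the easy input right away: the interval modules are indecomposable because their endomorphism rings are fields. Indeed, for any interval $[u,v]\subset\Z\cup\set{\pm\infty}$, an endomorphism $f$ of $\I[u,v]$ must vanish on every vertex outside $[u,v]$ and act by a scalar $c_i\in\K$ on $x_i$ for $i\in[u,v]$; commutativity across each edge joining two in-range vertices forces $c_{j-1}=c_j$ regardless of that edge's orientation, since the corresponding edge-map of $\I[u,v]$ is $\id_\K$. Hence $\Hom(\I[u,v],\I[u,v])\cong\K$ is a local ring, so $\I[u,v]$ is indecomposable.

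For the existence of the interval decomposition I would exhaust $Q$ by finite subquivers. For $m\geq 0$ let $Q^{(m)}$ be the full subquiver on $\set{x_{-m},\ldots,x_m}$; this is of type $\mathbb{A}_{2m+1}$, so Theorem~\ref{thm:gabriel_An} decomposes the restriction $V|_{Q^{(m)}}$ into interval modules. Under the restriction functor $\Rep(Q^{(m+1)})\to\Rep(Q^{(m)})$ an interval module $\I[a,b]$ is carried to $\I[\max(a,-m),\min(b,m)]$ when that integer interval is nonempty and to $0$ otherwise. The plan is to choose the decompositions of the various $V|_{Q^{(m)}}$ compatibly with these restrictions, organise the resulting system of nested interval summands into a forest whose level-$m$ nodes are the summands of $V|_{Q^{(m)}}$, and check that each infinite branch determines a subrepresentation of $V$ isomorphic to some $\I[u,v]$ --- with $u=-\infty$, respectively $v=\infty$, precisely when that branch repeatedly meets the left, respectively right, boundary of the windows. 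Pointwise finite-dimensionality of $V$ bounds the number of branches passing through any fixed vertex, so a K\"onig's lemma / inverse-limit argument shows that only finitely many of these interval subrepresentations are supported at any given vertex, that their sum is direct, and that it exhausts $V$. This yields the decomposition indexed by a (possibly infinite) set $\barc(V)$.

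Uniqueness of $\barc(V)$ and of the multiplicity function then follows from Azumaya's theorem, the version of Krull--Schmidt valid for possibly infinite direct sums of objects with local endomorphism rings. Concretely, since $V$ is pointwise finite-dimensional, only finitely many summands of any such decomposition are nonzero at a fixed vertex, so comparing two decompositions reduces vertex by vertex to the finite case of Theorem~\ref{thm:gabriel_An} together with the ordinary Krull--Schmidt theorem already invoked in this section.

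The step I expect to be the genuine obstacle is arranging the finite-window decompositions to be mutually compatible. Gabriel's theorem supplies each $V|_{Q^{(m)}}$-decomposition only up to isomorphism, whereas the forest construction requires honest internal direct-sum decompositions at every level that refine one another under restriction; producing these --- and then verifying that the branches glue to actual subrepresentations of $V$ whose infinite direct sum recovers $V$ --- is the technical heart of the argument. This is exactly what is carried out in \cite{infinite_zigzag}, whose Theorems~1.6 and~1.7 are the two assertions above. For the equioriented case one can alternatively feed in the decomposition theorem for pointwise finite-dimensional modules over a totally ordered set and then handle general orientations by the same windowing device.
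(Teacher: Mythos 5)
The paper does not prove this theorem at all: immediately before the statement it writes ``The following result summarises \cite[Theorem 1.6 and Theorem 1.7]{infinite\_zigzag}'' and takes the decomposition as a black box. Your proposal is therefore being compared against a citation rather than an argument, and on those terms it is a reasonable and honest sketch: you correctly prove the easy direction (local endomorphism rings of $\I[u,v]$), you correctly identify that uniqueness follows from Azumaya/Krull--Schmidt for possibly infinite sums of objects with local endomorphism rings, and you correctly isolate the genuine obstacle --- producing honest internal direct-sum decompositions of $V$ restricted to the finite windows $Q^{(m)}$ that are compatible under restriction, not merely decompositions up to isomorphism --- and you explicitly defer that step to \cite{infinite_zigzag}, which is exactly where the paper sends the reader. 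You have not closed that gap, but you have not claimed to; your sketch is a fair account of what the cited reference must do, and your inverse-limit/K\"onig heuristic is in the right spirit. One small caution: the reduction of uniqueness ``vertex by vertex to the finite case of Theorem~\ref{thm:gabriel_An}'' is a little too glib, since two decompositions of $V$ could a priori match on every finite window without matching globally on infinite intervals; the clean route is to invoke Azumaya directly (which applies because each $\I[u,v]$ has local endomorphism ring and $V$ is pointwise finite-dimensional) rather than to argue window by window.
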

    
\subsection{Affine Quivers of type $\widetilde{\mathbb{A}}$}\label{subsec:An_tilde} We say that $Q$ is of {\em type $\widetilde{\mathbb{A}}_n$} if its underlying graph has the form
\begin{align}\label{eq:Antilde}
\xymatrixcolsep{.6in}
\xymatrixrowsep{.45in}
\xymatrix{
& & x_{n-1} \ar@{-}[dll]_-{e_0} \ar@{-}[drr]^-{e_{n-1}} & & \\
x_0 \ar@{-}[r]_-{e_1} & x_1 \ar@{-}[r]_-{e_2} & \cdots  \ar@{-}[r]_-{e_{n-3}} & x_{n-3} \ar@{-}[r]_-{e_{n-2}} & x_{n-2}. 
}
\end{align}
for $n > 1$. In this section we assume that the ground field $\mathbb{K}$ is algebraically closed. With this assumption in place, the indecomposables of $Q$ can be classified into two families, which we describe below (with illustrative examples to follow).

\begin{defi}\label{def:indAtilde}
Let $Q$ be a quiver of type $\widetilde{\mathbb{A}}_n$.
\begin{enumerate}
    \item For each interval $[u,v] \subset \mathbb{Z}$, the representation $\N[u,v]$ of $Q$ is defined as follows. Setting $\ell := \lfloor \frac{v-u}{n}\rfloor$, the vector space assigned to $x_j$ is 
    \[
    \N[u,v]_{x_j} = \begin{cases}
    \mathbb{K}^{\ell+1} & \text{ if } j \in [u,v] \text{ mod } n, \text{ and } \\
    \mathbb{K}^\ell & \text{ otherwise.} 
    \end{cases}
    \]
    The linear map over $e_j$ is the identity whenever its source and target vector spaces are equidimensional; the exceptional cases occur when $j$ equals either $u$ or $v+1$ modulo $n$. In such cases, if $e_j$ has a clockwise orientation, then we have:
    \[
    \N[u,v]_{e_j} = \begin{cases} 
                        [0~\text{id}_\ell]  & \text{if } (u \text{ mod } n) = j \neq (v+1 \text{ mod } n), \\
                        [\text{id}_\ell~0]^T  & \text{if } (u \text{ mod } n) \neq j = (v+1 \text{ mod } n), \text{ and }\\
                       \left[\begin{smallmatrix}0&\text{id}_\ell\\0 &0\end{smallmatrix}\right]&\text{if } (u \text{ mod } n) = j = (v+1 \text{ mod } n)
                    \end{cases}.
    \] 
Similarly, when $e_j$ has a counterclockwise orientation,  $   \N[u,v]_{e_j} $ is the transpose of the appropriate matrix above.
    \item For each field element $\lambda \neq 0$ in $\mathbb{K}$ and integer $w \geq 1$, let $\T[\lambda;w]$ be the $Q$-representation which assigns to every vertex the vector space $\mathbb{K}^w$, to every edge $e_j$ for $j \neq 0$ the identity map $\text{id}_w$, and to $e_{0}$ the Jordan block with $\lambda$ along its diagonal.
\end{enumerate}
\end{defi}
We observe that $\N[u,v]$ and $\N[u',v']$ are isomorphic whenever both (a) lengths $v-u$ and $v'-u'$ coincide, and (b) the equalities $u = u'$ and $v = v'$ hold modulo $n$. To remove the ambiguity, we will implicitly assume henceforth that $u$ lies in $\set{0,\ldots,n-1}$ for every $\N[u,v]$.

\begin{ex} \label{ex:An-tilde-reps} Here is the representation $\N[1,9]$ of a type $\widetilde{\mathbb{A}}_6$ quiver; note that the dimension is larger between vertices $x_1$ and $x_3$, and that unlabelled arrows carry identity maps:
\[
\xymatrixcolsep{.6in}
\xymatrixrowsep{.45in}
\xymatrix{
& & \mathbb{K} \ar@{->}[dll]& & \\
\mathbb{K} \ar@{->}[r]_{\left[\begin{smallmatrix}0\\1\end{smallmatrix}\right]}& \mathbb{K}^2  \ar@{->}[r] & \mathbb{K}^2   & \mathbb{K}^2  \ar@{->}[r]_-{[1 ~ 0]} \ar@{->}[l] & \mathbb{K} \ar@{->}[ull]
}
\]
Similarly, here is the representation $\T[2;3]$ of the same quiver:
\[
\xymatrixcolsep{.6in}
\xymatrixrowsep{.45in}
\xymatrix{
& & \mathbb{K}^3 \ar@{->}[dll]_{\left[\begin{smallmatrix}2 & 1 & 0\\0 & 2 & 1 \\ 0 & 0 & 2\end{smallmatrix}\right]}& & \\
\mathbb{K}^3 \ar@{->}[r] & \mathbb{K}^3  \ar@{->}[r] & \mathbb{K}^3   & \mathbb{K}^3 \ar@{->}[l] \ar@{->}[r]  & \mathbb{K}^3 \ar@{->}[ull]
}
\]
\end{ex}

A proof of the following result can be found in \cite{donovan1973representation}.
 \begin{thm}\label{thm:an-tilde-indec}
 A representation of a type $\widetilde{\mathbb{A}}_n$ quiver is indecomposable if and only if it is isomorphic either to $\N[u,v]$ for some interval $[u,v]  \subset \Z$ or to $\T[\lambda;w]$ for some $\lambda \neq 0$ in $\K$ and $w \geq 1$ in $\mathbb{Z}$.
 \end{thm}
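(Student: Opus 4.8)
The plan is to first settle indecomposability of the two families by showing that their endomorphism rings are local, and then to prove exhaustiveness by a case split on whether or not all the edge maps $V_{e_j}$ of a given indecomposable $V$ are isomorphisms. An endomorphism of $\T[\lambda;w]$ must commute with the identity maps over $e_1,\dots,e_{n-1}$, so it is a single matrix $A \in \mathrm{Mat}_w(\K)$ placed at every vertex, and it must also commute with the Jordan block over $e_0$; thus $A$ lies in the centraliser of a single Jordan block, which is the local ring $\K[t]/(t^w)$. The same analysis of the wrap-around matrices of Definition \ref{def:indAtilde} shows that $\mathrm{End}(\N[u,v])$ is local too (an endomorphism is again a single matrix, now forced to be a polynomial in a single nilpotent Jordan block). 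By the Krull--Schmidt/Fitting setup of Section \ref{sec:decompo_type_A}, both families therefore consist of indecomposables, and it remains to see there are no others.

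Now let $V$ be an arbitrary indecomposable. \emph{Case 1: every $V_{e_j}$ is an isomorphism.} Using these isomorphisms to identify all vertex spaces with a single space $W$, the only surviving datum is the ``monodromy'' automorphism $\theta \in \mathrm{GL}(W)$ obtained by composing the maps $V_{e_j}^{\pm 1}$ once around the cycle, and re-choosing the identifications conjugates $\theta$. Hence this full subcategory is equivalent to that of finite-dimensional $\K[t,t^{-1}]$-modules; since $\K$ is algebraically closed its indecomposables are the cyclic modules $\K[t,t^{-1}]/((t-\lambda)^w)$ with $\lambda \neq 0$ (nonzero because $t$ is already a unit), and transporting the data back onto the cycle --- concentrating all the non-identity part on $e_0$ --- realises such a module as $\T[\lambda;w]$.

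\emph{Case 2: some $V_{e_j}$, say after relabelling $V_{e_0}$, is not an isomorphism.} Deleting $e_0$ yields a type $\mathbb{A}_n$ quiver $Q'$, so that $V$ amounts to the $Q'$-representation $V' := V|_{Q'}$ together with one remaining linear map $f := V_{e_0}$ between the ``endpoint'' spaces $V_{x_{n-1}}$ and $V_{x_0}$. By Gabriel's theorem (Theorem \ref{thm:gabriel_An}) write $V' \cong \bigoplus_i \I[a_i,b_i]$. Any summand with $0<a_i$ and $b_i<n-1$ is supported away from both endpoints, hence interacts with neither the domain nor the codomain of $f$ and splits off $V$; since such an interval module, viewed in $\Rep(Q)$, is exactly $\N[a_i,b_i]$, indecomposability forces either $V \cong \N[a_i,b_i]$ or else every summand of $V'$ reaches an endpoint. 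In the latter case one is left with a matrix problem: classify indecomposable pairs (a direct sum of endpoint-reaching interval modules, a non-invertible map $f$ between the endpoint multiplicity spaces) modulo the morphisms permitted by Gabriel's description of $\Hom$'s between interval modules. I would resolve this by induction --- peeling off summands using the failure of injectivity or surjectivity of $f$ to decrease $n$ or $\dim V$, with the Kronecker quiver ($n=2$) and its classical matrix-pencil classification as base case --- and verify that everything produced is a restriction of some $\N[u,v]$.

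The main obstacle is Case 2: cutting the cycle is conceptually clean, but the orientation bookkeeping is fiddly, since each of $x_0,x_{n-1}$ may be a source, a sink, or a pass-through of $Q'$, and Gabriel's $\Hom$-spaces (hence the allowed base changes in the matrix problem) depend on these choices. A cleaner --- though not more elementary --- alternative, in the spirit of Section \ref{sec:An-tilde-unwind}, is to pass to the universal cover $\UQ$ of $Q$, a type $\mathbb{A}_\infty$ quiver with a free $\Z$-action by $n$-fold shift: one checks that the pull-back of $\N[u,v]$ is the $\Z$-orbit $\bigoplus_{c \in \Z} \I[u+cn,v+cn]$ and the pull-back of $\T[\lambda;w]$ is $\I[-\infty,\infty]^w$, uses Theorem \ref{thm:A-inf-indec} together with finite-dimensionality of $V$ to exclude half-infinite summands in the pull-back of $V$, and then applies Galois descent (identifying $\mathrm{End}_Q(V)$ with the $\Z$-invariants of the endomorphism ring of the pull-back) to conclude that an indecomposable $V$ pulls back either to a free $\Z$-orbit of finite intervals, giving $\N[u,v]$, or to $\I[-\infty,\infty]^w$ with $\Z$ acting through a single Jordan block of nonzero eigenvalue, giving $\T[\lambda;w]$. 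Making this descent precise is exactly the classical covering theory underlying \cite{donovan1973representation}, which is why we are content to cite it.
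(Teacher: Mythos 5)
The paper does not prove this theorem; it records it as a known classification and cites \cite{donovan1973representation}, so there is no in-paper argument to compare against. Your proposal is a sensible sketch of how such a proof would go, but it has acknowledged gaps and ultimately defers to that same citation, so it should be read as an outline rather than a self-contained proof.

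Two of the gaps are worth naming concretely. First, the locality argument for $\mathrm{End}(\N[u,v])$ is overstated: once $v-u\geq n$, the vertex spaces take two distinct dimensions $\ell$ and $\ell+1$ with $\ell=\lfloor (v-u)/n\rfloor$, so an endomorphism is a \emph{pair} of matrices intertwined by the one or two non-identity edge maps, not ``a single matrix'' commuting with a Jordan block; the single-Jordan-block-centraliser picture is literally correct only when $(u \bmod n)=(v+1\bmod n)$, and the other values of $p_{u,v}$ need their own (easy but different) arguments. Second, and more substantively, Case~2 is a plan rather than a proof: the matrix problem obtained by cutting the cycle -- classifying a sum of endpoint-reaching interval modules together with a non-invertible connecting map, modulo the base changes Gabriel's $\Hom$-description permits -- is precisely where the content of Donovan--Freislich lies, and your proposed induction from the Kronecker base case is not carried out. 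The covering-theory alternative is closer in spirit to Section~\ref{sec:An-tilde-unwind}, but the descent from the decomposition of $\cL V$ in $\PRep(\UQ)$ back to $V$ itself (in particular producing $\lambda$ and the Jordan structure in the homogeneous case from the identification $\mathrm{End}_Q(V)\simeq(\mathrm{End}_{\UQ}(\cL V))^{\Z}$) needs the Galois-covering/push-down machinery to be set up carefully; you recognise this and cite the literature, which is defensible since the paper does the same, but it means the proposal does not stand on its own.
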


\section{The Euler Stability Condition}\label{sec:euler}

The stability condition that we will use throughout this paper is simple to define. By the discussion following Remark \ref{rem:hnss}, we only require a function $Q_0 \to \mathbb{R}$. This function is 
\[
x \mapsto \left(1-\#\set{e \in Q_1 \mid t(e) = x}\right),
\] 
with $\#$ denoting cardinality. Readers who are satisfied with this definition may safely proceed to the next section; in this section we will only describe the reasons which have motivated our choice. Consider a CW complex $X$, write $\sigma \leq \tau$ to indicate that the cell $\sigma$ lies in the boundary of the cell $\tau$ in $X$, and denote the poset of cells ordered by this face relation by $(X,\leq)$. A {\bf cellular sheaf} $F$ on $X$ is a functor from $(X,\leq)$ to the category $\Vect(\K)$ of $\mathbb{K}$-vector spaces \cite{curry2014sheaves}. Thus, $F$ assigns to each cell $\sigma$ a vector space $F_\sigma$ and to each face relation $\sigma \leq \tau$ a linear map $F_{\sigma\tau}:F_\sigma \to F_\tau$, subject to two axioms:
\begin{enumerate}
    \item (identity) the map $F_{\sigma\sigma}:F_\sigma \to F_\sigma$ is the identity for each cell $\sigma$, and
    \item (associativity) across any triple $\sigma \leq \tau \leq \nu$ of cells, we have $F_{\sigma\nu} = F_{\tau\nu} \circ F_{\sigma\tau}$.
\end{enumerate}
When $X$ is one-dimensional, the associativity axiom holds automatically since there are no triples of the form $\sigma < \tau < \nu$. Cellular sheaves on $X$ form an abelian category $\textbf{Shf}(X)$ whose morphisms are given by the natural transformations between functors $(X,\leq) \to \Vect(\K)$.

Let $F$ be a cellular sheaf on a CW complex $X$, and consider the sequence of vector spaces and linear maps
\begin{align}\label{eq:sheafcoh}
\xymatrixcolsep{.45in}
\xymatrix{
0 \ar@{->}[r] & C^0(X;F) \ar@{->}[r]^-{\delta_F^0} & C^1(X;F) \ar@{->}[r]^-{\delta_F^1} & \cdots,
}
\end{align}
where $C^j(X;F) := \prod_{\dim \sigma = j} F_\sigma$ for each dimension $j$, and the map $d_F^j$ has the following block structure. Its component $F_\sigma \to F_\tau$ equals 
\[
\delta_F^j|_{\sigma\tau} := \begin{cases} [\sigma:\tau] \cdot F_{\sigma\tau} & \text{if } \sigma \leq \tau\\
0 & \text{otherwise},
\end{cases}
\]
with $[\sigma:\tau]$ denoting the degree of the attaching map of $\tau$'s boundary along $\sigma$. A simple calculation \cite[Lemma 6.2.2]{curry2014sheaves} confirms that \eqref{eq:sheafcoh} is a cochain complex, and the associated cohomology is denoted
$H^j(X;F) := {\ker \delta^j_F}\big/{\text{img }\delta^{j-1}_F}$. If these {\em sheaf cohomology} groups are finite dimensional for all $j$ and vanish for $j \gg 0$, then there is a well-defined {\bf Euler characteristic} of $F$:
\[
\chi(X;F) := \sum_{j \geq 0} (-1)^j \cdot \dim H^j(X;F),
\]
which (as usual) also equals the alternating sum $\sum_{j \geq 0} (-1)^j \cdot \dim C^j(X;F)$ whenever this sum makes sense.

The graph underlying any quiver may be treated as a one-dimensional regular CW complex --- cells of dimension 0 and 1 are $Q_0$ and $Q_1$ respectively. In this case, the zeroth sheaf cohomology $H^0(X;{\bS}V)$ coincides with the {\em space of sections} \cite[Section 1]{seigal2022} of $V$.

\begin{defi}\label{def:repsheaf}
Let $Q$ be a quiver with underlying graph $X$. The {\bf associated sheaf functor} $\bS:\Rep(Q) \to \text{\bf Shf}(X)$ sends each $V$ in $\Rep(Q)$ to the cellular sheaf ${\bS}V$ in $\text{\bf Shf}(X)$ given by:
\[
({\bS}V)_\sigma:=\left\{\begin{array}{ll}
V_\sigma &\text{if }\sigma\in Q_0 \\
 V_{t(\sigma)}&\text{if }\sigma\in Q_1 \\
\end{array}\right. \quad\text{and }\quad ({\bS}V)_{\sigma\tau} :=\left\{\begin{array}{ll}
        V_\tau & \text{if }\sigma=s(\tau) \\
        \text{id} & \text{if }\sigma=t(\tau) 
    \end{array}\right.
\]
Morphisms $V \to W$ in $\Rep(Q)$ induce morphisms ${\bS}V \to {\bS}W$ in $\text{\bf Shf}(X)$ in a straightforward manner.
\end{defi}

Our next result shows that the Euler characteristic $\chi(X;{\bS}V)$ can form the imaginary part of a stability condition on $\Rep(Q)$.

\begin{prop}
Let $Q$ be a quiver with underlying graph $X$. The map $V \mapsto \chi(X;{\bS}V)$ is a well-defined group homomorphism $K_0(\Rep(Q)) \to (\mathbb{R},+)$.
\end{prop}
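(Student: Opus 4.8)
The plan is to produce an explicit formula for $\chi(X;{\bS}V)$ in terms of the dimension vector of $V$, and then observe that any function of the dimension vector that is additive on short exact sequences descends to a group homomorphism on $K_0(\Rep(Q))$. First I would note that since $X$ is one-dimensional, the cochain complex \eqref{eq:sheafcoh} has only two nonzero terms, $C^0(X;{\bS}V) = \prod_{x \in Q_0} V_x$ and $C^1(X;{\bS}V) = \prod_{e \in Q_1} V_{t(e)}$, both finite-dimensional (as $Q_0$, $Q_1$ are finite and $V$ is finite-dimensional); all higher cohomology vanishes automatically, so $\chi(X;{\bS}V)$ is well-defined and equals the alternating sum of dimensions of the cochain groups. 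This gives
\[
\chi(X;{\bS}V) \;=\; \dim C^0(X;{\bS}V) - \dim C^1(X;{\bS}V) \;=\; \sum_{x \in Q_0} \dim V_x \;-\; \sum_{e \in Q_1} \dim V_{t(e)}.
\]
Regrouping the second sum by target vertex, $\sum_{e \in Q_1}\dim V_{t(e)} = \sum_{x \in Q_0} \deg_{\mathrm{in}}(x)\cdot \dim V_x$ where $\deg_{\mathrm{in}}(x) = \#\{e \in Q_1 \mid t(e) = x\}$, so
\[
\chi(X;{\bS}V) \;=\; \sum_{x \in Q_0}\bigl(1 - \deg_{\mathrm{in}}(x)\bigr)\cdot \dim V_x,
\]
which is exactly the formula quoted in the introduction, and manifestly a $\Z$-linear function of $\udim_V$.

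Next I would address well-definedness on $K_0$. By the discussion after Remark \ref{rem:hnss}, when $Q$ is acyclic the dimension-vector map $V \mapsto \udim_V$ induces an isomorphism $K_0(\Rep(Q)) \simeq \Hom(Q_0,\Z)$; the assignment $\udim \mapsto \sum_x (1-\deg_{\mathrm{in}}(x))\cdot \udim(x)$ is a group homomorphism $\Hom(Q_0,\Z) \to (\R,+)$, and precomposing with the isomorphism gives the claim. If one prefers not to invoke acyclicity, the same conclusion follows directly: for any short exact sequence $0 \to U \to V \to W \to 0$ in $\Rep(Q)$, exactness is pointwise, so $\dim V_x = \dim U_x + \dim W_x$ for every $x$, whence $\chi(X;{\bS}V) = \chi(X;{\bS}U) + \chi(X;{\bS}W)$ from the displayed formula; thus $V \mapsto \chi(X;{\bS}V)$ respects the defining relations of the Grothendieck group and factors through $K_0(\Rep(Q))$.

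There is no real obstacle here; the only point requiring a modicum of care is the identification of $\chi$ with the alternating sum of $\dim C^j$, which needs the cochain groups to be finite-dimensional and eventually zero — both immediate in the one-dimensional finite setting — together with the standard rank–nullity bookkeeping across the two-term complex. (Strictly speaking the statement writes $K_0$ while the surrounding text writes $K$; I will keep the notation consistent with the statement.)
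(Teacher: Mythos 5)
Your proof is correct, but it takes a genuinely different route from the paper's. The paper argues abstractly: it asserts that the functor $\bS$ is exact, so a short exact sequence in $\Rep(Q)$ induces a short exact sequence of cochain complexes $0 \to C^\bullet(X;{\bS}U) \to C^\bullet(X;{\bS}V) \to C^\bullet(X;{\bS}W) \to 0$, and then invokes the long exact sequence in sheaf cohomology to conclude $\chi(X;{\bS}V) = \chi(X;{\bS}U) + \chi(X;{\bS}W)$. You instead compute $\chi(X;{\bS}V)$ explicitly as the alternating sum of the two cochain dimensions, reduce it to the closed formula $\sum_x (1-\deg_{\mathrm{in}}(x))\dim V_x$, and then observe that since exactness in $\Rep(Q)$ is pointwise, the dimension vector is additive on short exact sequences, so any $\Z$-linear function of $\udim_V$ descends to $K_0$. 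Your argument is more elementary --- it sidesteps the claim that $\bS$ is exact (which the paper asserts without proof) and avoids the long exact sequence machinery entirely --- at the cost of being tied to the concrete two-term complex. The paper's argument is more conceptual and foregrounds the exactness of $\bS$, which is a fact of independent interest; it also has the stylistic advantage of not front-loading the explicit formula, which the paper derives separately immediately afterward. Interestingly, that subsequent derivation in the paper is essentially the first half of your proof, so you have merely reorganized the presentation into a single self-contained argument. Both are sound.
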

\begin{proof}
It is not difficult to confirm that the functor $\bS$ is exact, so every exact sequence $0 \to U \to V \to W \to 0$ in $\Rep(Q)$ produces a short exact sequence of cochain complexes
\[
0 \to C^\bullet(X;{\bS}U) \to C^\bullet(X;{\bS}V) \to C^\bullet(X;{\bS}W) \to 0.
\]
Passing to the long exact sequence on sheaf cohomology, we obtain the desired equality $\chi(X;{\bS}V) = \chi(X;{\bS}U) + \chi(X;{\bS}W)$.
\end{proof}

Since there are only two nontrivial cochain groups to consider in \eqref{eq:sheafcoh} when $X$ is the underlying graph of a quiver $Q$, we have for each $V$ in $\Rep(Q)$ the formula
\begin{align*}
\chi(X;{\bS}V) &= \sum_{x \in Q_0}\dim V_{x} - \sum_{e \in Q_1} \dim V_{t(e)} & \text{by }\eqref{eq:sheafcoh} \text{ and Def }\ref{def:repsheaf}, \\
&= \sum_{x \in Q_0} \left(1-\deg_\text{in}(x)\right) \cdot \dim V_x & .
\end{align*}
Here $\deg_\text{in}(x) := \#\set{e \in Q_1 \mid t(e)=x}$. We have arrived at the desired stability condition.

\begin{defi}\label{def:eulerstab}
The {\bf Euler stability condition} for an acyclic quiver $Q$ is the map $\epsilon:Q_0 \to \mathbb{R}$ given by $
\epsilon(x) := (1-\deg_\text{in}(x))$, and the associated slope defined for $V \neq 0$ in $\Rep(Q)$ by
\[
\phi_\epsilon(V) := \frac{\sum_{x \in Q_0} \epsilon(x) \cdot \dim V_x}{\sum_{x \in Q_0} \dim V_x},
\]
is called the {\bf Euler slope}.
\end{defi}

\section{HN Filtrations of Ordinary Persistence Modules}\label{sec:HN-An}

Throughout this section, $Q$ will denote the equioriented quiver of type $\mathbb{A}_n$ for some $n\geq 1$, and $V$ will denote a fixed nonzero object of $\Rep(Q)$, i.e., an ordinary persistence module: 
\begin{align}\label{eq:pmod}
\xymatrixcolsep{.6in}
\xymatrix{
V_0 \ar@{->}[r]^-{f_1} & V_1 \ar@{->}[r]^-{f_2} & \cdots \ar@{->}[r]^-{f_{n-1}} & V_{n-1}}.
\end{align} 
Our goal here is to link the barcode decomposition of $V$ from (\ref{eqn:decompo_An}), recalled below:
\[
V \simeq \hspace{-.3em}\bigoplus_{[u,v] \in \barc(V)}\hspace{-.35em} \I[u,v]^{d_{uv}},
\]
to its HN filtration with respect to the Euler stability condition (see Theorem \ref{thm:HN_filtration} and Definition \ref{def:eulerstab}). The first step in this direction is to establish some general results which hold for a wider class of stability conditions.

\subsection{General Results for Antitone Slopes}\label{sec:genres}

Let $\preceq$ denote the lexicographical total ordering on subintervals of $[0,n-1]$, where $[u,v] \preceq [u',v']$ holds if we have either $u < u'$  or both $u = u'$  and $v \leq v'$. Let $\alpha:Q_0 \to \mathbb{R}$ be a stability condition on $Q$ with associated slope $\phi_\alpha$. We say that our slope $\phi_\alpha$ is {\bf antitone} if the inequality $\phi_\alpha(\I[u,v]) \geq \phi_\alpha(\I[u',v'])$ holds in $\mathbb{R}$ whenever we have $[u,v] \preceq [u',v']$. 

\begin{lmm}\label{lmm:semi-stable_intervals}
If $\phi_\alpha$ is antitone, then every interval module $\I[u,v]$ for $[u,v] \in \barc(V)$ is $\alpha$-semistable.
\end{lmm}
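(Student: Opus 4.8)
The plan is to show that any interval module $\I[u,v]$ appearing in $\barc(V)$ is $\alpha$-semistable by analysing its subrepresentations directly. First I would observe that since $Q$ is equioriented of type $\mathbb{A}_n$, the interval module $\I[u,v]$ has a very simple subrepresentation structure: a nonzero subrepresentation $U \subset \I[u,v]$ must have $U_{x_i}$ either $0$ or all of $\K$ at each vertex, and the set of indices $i$ where $U_{x_i} = \K$ must be "closed under going forward" along the arrows (because the edge maps of $\I[u,v]$ are identities on the support and a subrepresentation must be preserved by them). Hence the support of $U$ is of the form $[w,v]$ for some $w$ with $u \leq w \leq v$; that is, $U \simeq \I[w,v]$ with $[u,v] \preceq [w,v]$.

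Next I would use the antitone hypothesis. Since $[u,v] \preceq [w,v]$ whenever $u \le w$, the antitone property gives $\phi_\alpha(\I[w,v]) \le \phi_\alpha(\I[u,v])$, i.e. $\phi_\alpha(U) \le \phi_\alpha(\I[u,v])$ for every nonzero subrepresentation $U \subset \I[u,v]$. By the definition of semistability (the definition following Theorem~\ref{thm:HN_filtration}, specialised to $\Rep(Q)$ via $\alpha$), this is exactly the statement that $\I[u,v]$ is $\alpha$-semistable. Note this argument does not actually require $[u,v] \in \barc(V)$ — every interval module of an equioriented type $\mathbb{A}_n$ quiver is semistable under an antitone slope — but we only need it for those in $\barc(V)$.

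The one point that needs a little care, and which I expect to be the main (modest) obstacle, is the precise identification of the subrepresentations of $\I[u,v]$: one must verify that a subrepresentation cannot have support $[u,w]$ for $w<v$ or some disconnected support. This is where equiorientation is used crucially — the forward-pointing identity maps force the support of $U$ to be an "up-set" of the chain $u \le u+1 \le \cdots \le v$ with respect to the arrow direction, hence an interval with right endpoint $v$. Once this structural fact is pinned down, the rest is immediate from antitonicity, so the proof is short.
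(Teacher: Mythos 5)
Your proof is correct, and it takes a genuinely more direct route than the paper's. The paper first applies Gabriel's theorem to decompose an arbitrary nonzero submodule $U\subset\I[u,v]$ into a direct sum of interval modules, then invokes Lemma~\ref{lmm:seq_and_slope} (iteratively, over the split exact sequences of the direct sum) to bound $\phi_\alpha(U)$ by $\max_{[u',v']\in\barc(U)}\phi_\alpha(\I[u',v'])$, and only then reduces to the observation that $\I[u',v']\hookrightarrow\I[u,v]$ forces $u'\ge u$ and $v'=v$. You instead classify the submodules of $\I[u,v]$ outright: each $U_{x_i}$ is $0$ or $\K$, the support is an up-set along the forward arrows and hence equals $[w,v]$ for some $w\ge u$, so $U\simeq\I[w,v]$ and antitonicity finishes immediately. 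Both arguments use equiorientation in an essential way (yours in the ``closed under going forward'' step, the paper's in the characterisation of interval inclusions via the diagram \eqref{eq:barinc}), so neither is more general. What yours buys is elegance and self-containment: it avoids invoking Gabriel's theorem and Lemma~\ref{lmm:seq_and_slope} altogether. What the paper's buys is a slightly more modular structure --- reducing to indecomposable submodules via Krull--Schmidt is a pattern that transfers verbatim to quivers where the submodules of an indecomposable are not themselves indecomposable, which is relevant in the $\widetilde{\mathbb{A}}_n$ analysis later in the paper. Your closing remark that the hypothesis $[u,v]\in\barc(V)$ is not actually used is also correct, and applies equally to the paper's proof.
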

\begin{proof}
Fix an interval $[u,v] \in \barc(V)$, and consider a nonzero submodule $U \subset \I[u,v]$. Using the barcode decomposition of $U$ from Theorem \ref{thm:gabriel_An} followed by Lemma \ref{lmm:seq_and_slope}, we obtain the inequality 
\[
\phi_\alpha(U) \leq \max_{[u',v'] \in \barc(U)}\phi_\alpha(\I[u',v']).
\] Thus, we are required prove that $\phi_\alpha(\I[u',v']) \leq \phi_\alpha(\I[u,v])$ holds whenever $\I[u',v']$ is a submodule of $\I[u,v]$. It is readily checked that this submodule condition holds if and only if $u' \geq u$ and $v' = v$. In particular, we need $u' \geq u$ for dimension reasons, and $v' = v$ follows from the fact that if $v$ exceeds $v'$, then the following diagram of vector spaces fails to commute:
\begin{align}\label{eq:barinc}
\xymatrixcolsep{.6in}
 \xymatrix{
 \K=\I[u',v']_{v'} \ar@{->}[r] \ar@{^{(}->}[d] & 0=\I[u',v']_{v'+1} \ar@{^{(}->}[d]\\
 \K=\I[u,v]_{v'}\ar@{->}[r]_-{\text{id}} & \K = \I[u,v]_{v'+1}
 }
\end{align}
Thus, we have $[u,v] \preceq [u',v']$ and the desired conclusion  follows because $\phi_\alpha$ is antitone.
\end{proof}

For antitone $\phi_\alpha$'s we can now describe HN filtrations directly in terms of barcodes.

\begin{prop}\label{prop:HN-persitence_module}
Assume that the slope $\phi_\alpha$ is antitone,  for each $t \in \mathbb{R}$  the HN $\R$-filtration of $V$ evaluated at $t$ is given by
\begin{align}\label{eqn:formula_HN_interval}
V_{\mathbb{R}}(t) \simeq  \bigoplus_{[u,v]}  \I[u,v]^{d_{uv}},
\end{align}
where the sum is over all $[u,v] \in \barc(V)$ satisfying $\phi_\alpha(\I[u,v]) \geq t$.  The multiplicities $d_{uv}$ are the ones appearing in \eqref{eqn:decompo_An} and the isomorphism can be obtained by restricting the decomposition from \eqref{eqn:decompo_An}.
\end{prop}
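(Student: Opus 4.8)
The strategy is to exhibit the right-hand side of \eqref{eqn:formula_HN_interval} as a filtration of $V$ whose successive quotients are semistable with strictly decreasing slopes, and then invoke the uniqueness clause of Theorem \ref{thm:HN_filtration}. Concretely, let $\theta_1 > \theta_2 > \cdots > \theta_m$ be the distinct values of $\phi_\alpha(\I[u,v])$ as $[u,v]$ ranges over $\barc(V)$, and for each $k$ set
\[
W^k := \bigoplus_{\substack{[u,v] \in \barc(V)\\ \phi_\alpha(\I[u,v]) \geq \theta_k}} \I[u,v]^{d_{uv}},
\]
viewed as a subrepresentation of $V$ via the fixed decomposition \eqref{eqn:decompo_An}. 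This gives a chain $0 = W^0 \subsetneq W^1 \subsetneq \cdots \subsetneq W^m = V$, and one checks immediately that the right-hand side of \eqref{eqn:formula_HN_interval} is exactly $W^{k}$ for the unique $k$ with $\theta_k \leq t < \theta_{k-1}$ (and equals $V$ for $t \leq \theta_m$, and $0$ for $t > \theta_1$); that is, $V_\R(t)$ as \emph{defined} via $\lfloor t \rfloor_\phi$ will coincide with this once we know $W^\bullet$ is the HN filtration.

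The two things that need checking are: first, that each quotient $W^k/W^{k-1} \simeq \bigoplus_{\phi_\alpha(\I[u,v]) = \theta_k} \I[u,v]^{d_{uv}}$ is $\alpha$-semistable; and second, that the slopes $\phi_\alpha(W^k/W^{k-1})$ are strictly decreasing in $k$. For the first point, each interval summand $\I[u,v]$ appearing in $W^k/W^{k-1}$ is $\alpha$-semistable by Lemma \ref{lmm:semi-stable_intervals}, and all of these summands share the common slope $\theta_k$; repeated application of the second assertion of Lemma \ref{lmm:seq_and_slope} (building the direct sum one factor at a time via split short exact sequences) then shows the whole quotient is semistable, and the first assertion of that lemma shows its slope is again $\theta_k$. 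The second point is immediate: $\phi_\alpha(W^k/W^{k-1}) = \theta_k$ and the $\theta_k$ were chosen in strictly decreasing order. Hence $W^\bullet$ satisfies all the defining properties of the HN filtration, so by uniqueness (Theorem \ref{thm:HN_filtration}) it \emph{is} the HN filtration $V^\bullet$, and unwinding the definition of $V_\R(t)$ yields \eqref{eqn:formula_HN_interval} together with the stated compatibility of multiplicities and isomorphisms with \eqref{eqn:decompo_An}.

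The main obstacle — really the only subtlety — is making sure the bookkeeping around the indexing function $\lfloor t \rfloor_\phi$ lines up, i.e. that $V_\R(t) = W^k$ precisely on the half-open interval $[\theta_k, \theta_{k-1})$ rather than being off by one; this is a matter of carefully matching the convention ``$\lfloor t \rfloor_\phi$ is the smallest $i$ with $t \geq \phi(V^{i+1}/V^{i})$'' against the decreasing enumeration of the $\theta_k$. Everything else is a routine consequence of Lemma \ref{lmm:semi-stable_intervals}, Lemma \ref{lmm:seq_and_slope}, and the uniqueness in Theorem \ref{thm:HN_filtration}. (One should also note the degenerate case where $\barc(V)$ contributes only a single slope value, in which case $V$ is itself semistable and the HN filtration is trivial, consistent with Remark \ref{rem:hnss}.)
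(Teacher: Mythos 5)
Your proof is correct, but it takes a different route than the paper. The paper's argument is a one‑liner: it first observes that every interval module $\I[u,v]$ is $\alpha$-semistable (Lemma \ref{lmm:semi-stable_intervals}), hence by Remark \ref{rem:hnss} has the trivial HN filtration, for which \eqref{eqn:formula_HN_interval} holds by inspection; then it applies Proposition \ref{prop:direct_sum_HN}, which says the HN $\R$-filtration is additive over direct sums, to the barcode decomposition \eqref{eqn:decompo_An}. Your argument bypasses Proposition \ref{prop:direct_sum_HN} and instead re-derives its content in the specific setting at hand: you sort the slope values $\theta_1 > \cdots > \theta_m$, build the filtration $W^\bullet$ explicitly, verify semistability of each quotient via Lemma \ref{lmm:semi-stable_intervals} and repeated application of Lemma \ref{lmm:seq_and_slope}, and invoke uniqueness from Theorem \ref{thm:HN_filtration}. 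This is essentially the same chain of ideas as the paper's proof of Proposition \ref{prop:direct_sum_HN} itself, specialised to barcodes. What the paper's route buys is brevity and reuse; what your route buys is self-containment and slightly more transparency about where the filtration comes from, at the cost of repeating an argument already made one proposition earlier. The one thing worth adding for full rigour is a sentence justifying that each $W^k$ really is a subrepresentation of $V$ (i.e., that the partial direct sum sits inside $V$ via the fixed isomorphism from \eqref{eqn:decompo_An}); this is immediate but is the step that makes the uniqueness argument go through.
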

\begin{proof} We know from Lemma \ref{lmm:semi-stable_intervals} that interval modules are semistable for antitone $\phi_\alpha$. By Remark \ref{rem:hnss}, the HN filtration of such a module is the trivial one ($0\subsetneq \I[u,v]$), which clearly satisfies \eqref{eqn:formula_HN_interval}. By Proposition \ref{prop:direct_sum_HN}, the left side of \eqref{eqn:formula_HN_interval} is (also) additive, whence $V$ satisfies (\ref{eqn:formula_HN_interval}) as claimed.
\end{proof}

\begin{cor}\label{cor:HN-filtr_An_quotients} Assume that $\phi_\alpha$ is antitone and that the HN filtration $V^\bullet$ of $V$ has length $\ell$. Then for each $j$ in $\set{1,\ldots,\ell}$, we have an isomorphism
\[\frac{V^j}{V^{j-1}} \simeq \bigoplus_{[u,v]} \I[u,v]^{d_{uv}},\]
where the sum ranges over all $[u,v] \in \barc(V)$ satisfying $\phi_\alpha(\I[u,v]) = \phi_\alpha(V^j/V^{j-1})$, and where $d$ indicates the multiplicity function of $V$ from \eqref{eqn:decompo_An}.
\end{cor}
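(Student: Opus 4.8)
The plan is to read the HN filtration of $V$ directly off Proposition \ref{prop:HN-persitence_module} and then take successive quotients; the corollary is essentially a repackaging of that proposition. First I would fix the decomposition \eqref{eqn:decompo_An} and observe that, since $\phi_\alpha$ is antitone, Proposition \ref{prop:HN-persitence_module} describes the HN $\R$-filtration $t \mapsto V_\R(t)$ for every $t \in \R$. This functor takes only finitely many values, each obtained by restricting \eqref{eqn:decompo_An}: it is $0$ for $t$ above every interval slope, it equals $V$ for $t$ below every interval slope, and it acquires the summand $\I[u,v]^{d_{uv}}$ exactly as $t$ descends past $\phi_\alpha(\I[u,v])$. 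Since $V_\R(t) = V^{\lfloor t\rfloor_\phi}$ by definition, its distinct values are precisely the terms $0 = V^0 \subsetneq V^1 \subsetneq \cdots \subsetneq V^\ell = V$ of the HN filtration. Comparing the two descriptions identifies $\ell$ with the number of distinct slopes among the interval summands of $V$; writing $\theta_1 > \cdots > \theta_\ell$ for those slopes, we get $V^j \simeq \bigoplus \I[u,v]^{d_{uv}}$ over $\{[u,v] \in \barc(V) : \phi_\alpha(\I[u,v]) \geq \theta_j\}$, compatibly with \eqref{eqn:decompo_An}, for each $j$.

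Next I would pass to quotients. Because $V^{j-1} \subset V^j$ is the inclusion of one sub-sum of \eqref{eqn:decompo_An} into a larger one, the quotient $V^j/V^{j-1}$ is the complementary sub-sum: it is $\bigoplus \I[u,v]^{d_{uv}}$ over those $[u,v] \in \barc(V)$ with $\theta_j \leq \phi_\alpha(\I[u,v]) < \theta_{j-1}$ (with the convention $\theta_0 = +\infty$). Since every interval slope lies in $\{\theta_1, \ldots, \theta_\ell\}$, this index set is exactly $\{[u,v] : \phi_\alpha(\I[u,v]) = \theta_j\}$, which is nonempty. Finally, $V^j/V^{j-1}$ is a direct sum of interval modules all of slope $\theta_j$, so $\phi_\alpha(V^j/V^{j-1}) = \theta_j$, and the index set just obtained coincides with the one in the statement.

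I do not expect a genuine obstacle here, since all the work has already been done in Proposition \ref{prop:HN-persitence_module} (itself relying on Proposition \ref{prop:direct_sum_HN} and Lemma \ref{lmm:semi-stable_intervals}); what remains is bookkeeping. The one point deserving a little care is the behaviour at the ``breakpoint'' values $t = \theta_k$, where the slope-indexed formula of Proposition \ref{prop:HN-persitence_module} and the convention defining $\lfloor\,\cdot\,\rfloor_\phi$ must be lined up. I would sidestep this by arguing throughout with the finite set of distinct values of the functor $V_\R$ rather than evaluating at specific $t$, using that each such value is a genuine sub-sum of \eqref{eqn:decompo_An} so that the successive quotients can be read off as complementary sub-sums.
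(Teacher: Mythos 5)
Your proof is correct and follows the same route the paper intends: the corollary is stated without a separate argument, as an immediate consequence of Proposition~\ref{prop:HN-persitence_module}, and your write-up simply spells out the bookkeeping (identifying each $V^j$ with a sub-sum of~\eqref{eqn:decompo_An} over intervals of slope $\geq \theta_j$, then reading off $V^j/V^{j-1}$ as the complementary sub-sum over intervals of slope exactly $\theta_j$). Your closing remark about the breakpoint convention for $\lfloor\,\cdot\,\rfloor_\phi$ is a reasonable note of caution, and your choice to argue with the finitely many distinct values of $V_\R$ rather than at specific parameter values $t$ sidesteps it cleanly.
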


\subsection{Specific Results for Euler Slopes} We continue to work with a fixed persistence module $V$ as in \eqref{eq:pmod}, but now specialise to the Euler stability condition $\epsilon:Q_0 \to \mathbb{R}$ from Definition \ref{def:eulerstab}. It is immediate from this definition that the Euler slope of $V$ is 
\begin{align}\label{eq:euslopeAn}
\phi_\epsilon(V) = \frac{\dim V_0}{\sum_{i=0}^{n-1}\dim V_i}.
\end{align}

Here is the full version of Theorem ({\bf A}) from the Introduction.
\begin{thm}\label{thm:hn_zerobars}
Let $[u,v] \mapsto d_{u,v}$ be the multiplicity function of $V$ as described in \eqref{eqn:decompo_An}, and let $J$ be the (possibly empty) set of all indices $j \in \set{0,\ldots,n-1}$ for which $[0,j]$ lies in $\barc(V)$. Then, 
\begin{enumerate}
    \item the length $\ell$ of the (Euler) HN filtration $V^\bullet$ is \[
    \ell = 1+ (\# J);
    \] and moreover,
    \item if the elements of $J$ are $j_1 < \ldots < j_{\ell-1}$ and $J^*$ is the set of all intervals $[u,v] \in \barc(V)$ with $u \neq 0$, then we have
    \[
    \frac{V^k}{V^{k-1}} = \begin{cases}\I[0,j_k]^{d_{0,j_k}} & \text{if } k < \ell \\
    \bigoplus_{[u,v] \in J^*} \I[u,v]^{d_{u,v}} & \text{if } k=\ell.
    \end{cases} 
    \]
\end{enumerate}
\end{thm}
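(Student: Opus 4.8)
The plan is to verify the two claims by computing the Euler slopes of the interval modules that appear in $\barc(V)$, checking that the Euler slope $\phi_\epsilon$ is antitone, and then invoking the general machinery of Section~\ref{sec:genres}. First I would observe from \eqref{eq:euslopeAn} that for an interval module,
\[
\phi_\epsilon(\I[u,v]) = \begin{cases} \tfrac{1}{v-u+1} & \text{if } u = 0,\\ 0 & \text{if } u \neq 0,\end{cases}
\]
since the numerator $\dim \I[u,v]_0$ is $1$ exactly when $0 \in [u,v]$, i.e.\ when $u=0$, and is $0$ otherwise. In particular the intervals $[0,j]$ have pairwise distinct positive Euler slopes $\tfrac{1}{j+1}$, which are \emph{strictly decreasing} in $j$, while every interval with left endpoint $u \neq 0$ has Euler slope exactly $0$.

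Next I would confirm that $\phi_\epsilon$ is antitone with respect to the lexicographic order $\preceq$ on subintervals of $[0,n-1]$, so that Lemma~\ref{lmm:semi-stable_intervals}, Proposition~\ref{prop:HN-persitence_module} and Corollary~\ref{cor:HN-filtr_An_quotients} all apply. This is a short case check using the slope formula above: if $[u,v] \preceq [u',v']$ then either $u = u' = 0$, in which case $v \leq v'$ forces $\tfrac{1}{v+1} \geq \tfrac{1}{v'+1}$; or $u = 0 < u'$, in which case $\phi_\epsilon(\I[u,v]) > 0 = \phi_\epsilon(\I[u',v'])$; or $0 < u \leq u'$, in which case both slopes are $0$. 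In every case $\phi_\epsilon(\I[u,v]) \geq \phi_\epsilon(\I[u',v'])$, so $\phi_\epsilon$ is antitone.

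With antitonicity in hand, the statement becomes essentially a bookkeeping exercise. By Proposition~\ref{prop:HN-persitence_module} (or Corollary~\ref{cor:HN-filtr_An_quotients}), the distinct values of $\phi_\epsilon$ on $\barc(V)$ are in bijection with the successive quotients $S^k = V^k/V^{k-1}$, and each $S^k$ is the direct sum, with the multiplicities from \eqref{eqn:decompo_An}, of all interval modules in $\barc(V)$ having that common slope value. The set of slope values taken by members of $\barc(V)$ is $\{\tfrac{1}{j_1+1}, \ldots, \tfrac{1}{j_{\ell-1}+1}\}$ together with $\{0\}$ --- the latter being present because $V \neq 0$ guarantees $\barc(V)$ is nonempty, and \emph{some} interval must realize the infimum slope (if $J^* = \varnothing$ then $\barc(V)$ consists only of intervals $[0,j]$, and the smallest slope $\tfrac{1}{j_{\ell-1}+1}$ is still realized, so the count still works out --- I would handle this degenerate case explicitly). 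Arranging these $\ell := 1 + \#J$ values in strictly decreasing order gives $\tfrac{1}{j_1+1} > \cdots > \tfrac{1}{j_{\ell-1}+1} > 0$ when $j_1 < \cdots < j_{\ell-1}$, which establishes part (1) and fixes the indexing so that $S^k$ has slope $\tfrac{1}{j_k+1}$ for $k < \ell$ and slope $0$ for $k = \ell$. Reading off Corollary~\ref{cor:HN-filtr_An_quotients} at each value then yields $S^k \simeq \I[0,j_k]^{d_{0,j_k}}$ for $k<\ell$ (the only interval of slope $\tfrac{1}{j_k+1}$ being $[0,j_k]$ itself) and $S^\ell \simeq \bigoplus_{[u,v] \in J^*} \I[u,v]^{d_{u,v}}$ (all intervals of slope $0$ being exactly those with $u \neq 0$), which is part (2). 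The only mild subtlety, and the place I would be most careful, is ensuring the slope-$0$ class is always nonempty so that the length is exactly $1 + \#J$ rather than $\#J$ --- this is where the standing hypothesis $V \neq 0$ is used, and it deserves an explicit sentence rather than being swept under the rug.
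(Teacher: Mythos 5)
Your approach is the same as the paper's: compute $\phi_\epsilon(\I[u,v])$ from \eqref{eq:euslopeAn}, observe that the slope is antitone, and read off both assertions from Corollary~\ref{cor:HN-filtr_An_quotients}. The paper's proof is essentially this one-liner, and your expanded antitone case-check and bookkeeping are correct.

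However, your handling of the degenerate case $J^* = \emptyset$ is mistaken, and you happen to have put your finger on a real blemish in the theorem that the paper also glosses over. You claim that the hypothesis $V \neq 0$ forces the slope-$0$ class to be nonempty, but this is false: take $V = \I[0,0]$, so that $\barc(V) = \set{[0,0]}$, $J = \set{0}$ and $J^* = \emptyset$. The HN filtration of this $V$ is $0 \subsetneq V$, which has length $1 = \#J$, while assertion (1) would predict $\ell = 1 + \#J = 2$. Your parenthetical remark that ``the count still works out'' when $J^* = \emptyset$ is therefore not true; in that case the length is $\#J$, not $1 + \#J$. The observations you offer (that $\barc(V)$ is nonempty and that some interval realises the minimum slope) are both correct but do not imply that the minimum slope is $0$. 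To make the theorem literally correct in all cases one should either assume $J^* \neq \emptyset$, or allow the top quotient $S^\ell$ to vanish and correspondingly reduce $\ell$ by one when $J^* = \emptyset$. The paper's proof, like yours, applies Corollary~\ref{cor:HN-filtr_An_quotients} without addressing this corner case --- so you were right to be uneasy, you just misdiagnosed the fix.
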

\begin{proof} Replacing $V$ by an indecomposable module $\I[u,v]$ in the formula \eqref{eq:euslopeAn} gives
\[
\phi_\epsilon(\I[u,v]) = \begin{cases}\frac {1}{v+1}, & \text{if }u=0\\ 0&\text{otherwise.}\end{cases}
\]
Thus, $\phi_\epsilon$ is antitone and we may apply Corollary \ref{cor:HN-filtr_An_quotients} to establish both assertions. 
\end{proof}

The preceding result establishes a direct relationship between barcode decompositions and (Euler) HN filtrations of ordinary persistence modules --- to obtain the HN filtration from the barcode inductively, one lexicographically orders the intervals in $\barc(V)$ with left endpoint zero as $[0,j_0] \prec [0,j_1] \prec \cdots \prec [0,j_{n-1}]$; then for $k < \ell$, we have 
\[
V^{k} \simeq V^{k-1} \oplus \I[0,j_k]^{d_{0,j_k}}.
\]
Conversely, it is possible to extract the multiplicity of any interval of the form $[0,v]$ in $\barc(V)$ by using the HN filtration: there is at most one index $k$ for which the equality $\phi_\epsilon(V^k/V^{k-1}) = 1/(v+1)$ holds. If such a $k$ exists, then we have $d_{0,v} = \dim(V^k/V^{k-1})_{x_0}$; otherwise, $[0,v]$ is not in $\barc(V)$. There is, however, no separation of intervals $[u,v] \in J^*$ from each other since all of them have slope $0$. Two remedies are available for this unfortunate incompleteness --- the first of these comes in the form of slopes which separate all intervals in $\barc(V)$. Any such {\em totally stable} slope \cite{kinser}, if used instead of $\epsilon$ in the proofs above, would allow us to recover the entire barcode of $V$ via successive quotients of the associated HN filtration. The second remedy is described below.

\begin{rmk}
\label{rem:recursive}Let us label the vertices and edges of the underlying quiver $Q$ as follows:
\[
\xymatrixcolsep{.6in}
\xymatrix{
x_0 \ar@{->}[r]^-{e_1} & x_1 \ar@{->}[r]^-{e_2} & \cdots \ar@{->}[r]^-{e_{n-1}} & x_{n-1}. 
}
\]
For each integer $k$ in $\set{0,\ldots,n-1}$, define $Q^{\geq k}$ to be the type $\mathbb{A}_{n-k}$ subquiver of $Q$ consisting of vertices $\set{x_j \mid k \leq j \leq n-1}$ and all the edges between them. Let $V^{\geq k}$ be the representation of $Q^{\geq k}$ obtained by restricting $V$ to  $Q^{\geqslant k}$. By the restriction principle \cite{zigzag_pers}, the decomposition (\ref{eqn:decompo_An}) becomes 
\[
V^{\geqslant k}\simeq\bigoplus_{[u,v]} \I[\max(u-k,0),v-k]^{d_{uv}},
\]
where the sum is over all intervals $[u,v]\in \barc(V)$ satisfying $v \geq k$. For a fixed interval  $[k,v]$ in $\barc(V)$, the above decomposition induces the relation on multiplicities
\[
d_{k,v}=d^{\geq k}_{0,v-k}  - \sum_{[u,v]}d_{u,v}.
\]
Here $d^{\geq k}_{0,v-k}$ denotes the multiplicity of $[0,v-k]$ in  $\barc(V^{\geqslant k})$, and the sum is over intervals $[u,v]$ in $\barc(V)$ satisfying $u < k$. By Theorem \ref{thm:hn_zerobars}, we have that $d^{\geq k}_{0,v-k}$ can be recovered from the HN filtration of $V^{\geq k}$ along the Euler stability condition. Thus, we can inductively reconstruct the entire multiplicity function of $V$ from the HN filtrations of $\set{V^{\geq k} \mid k \geq 0}$.
\end{rmk}

\section{Unwinding Affine Type $\widetilde{\mathbb{A}}$ Quivers}\label{sec:An-tilde-unwind}

We say that a quiver is of {\bf type $\mathbb{A}_\infty$} whenever its underlying graph has the form
\[
\xymatrixcolsep{.6in}
\xymatrix{
\cdots \ar@{-}[r]^-{a_{-1}} & y_{-1} \ar@{-}[r]^-{a_{0}}  & y_0 \ar@{-}[r]^-{a_{1}} & y_1 \ar@{-}[r]^-{a_{2}} & \cdots
}
\]
Explicitly, the vertex set is identified with the set of integers $\Z$, and there is a unique edge between every adjacent pair of integers. Thus, representations of $\mathbb{A}_\infty$ quivers are {\em infinite} zigzag persistence modules. 

We now fix a quiver $Q = (s,t:Q_1 \to Q_0)$ of type $\widetilde{\mathbb{A}}_n$ labelled as in \eqref{eq:Antilde}.

\begin{defi}
The {\bf unwinding} of $Q$ is a quiver $\UQ$ of type $\mathbb{A}_\infty$ whose vertex and edge sets are $\UQ_0 := Q_0 \times \Z$ and $\UQ_1 := Q_1 \times \Z$, with source and target maps $\sigma,\tau:\UQ_1 \to \UQ_0$ given as follows: for all $j \in \set{1,2,\ldots,n-1}$ and $c \in \Z$, we have
\[
\sigma(e_j,c) := (s(e_j),c) \quad \text{ and } \quad \tau(e_j,c) := (t(e_j),c).
\]
If $s(e_0) = x_0$ then the edge $(e_0,c) \in \UQ_1$ has source $(x_0,c)$ and target $(x_{n-1},c-1)$ for each $c \in \Z$. Conversely, if $s(e_0) = x_{n-1}$, then $(e_0,c)$ has source $(x_{n-1},c-1)$ and target $(x_0,c)$.
\end{defi}

Let $\rho: \UQ_0 \to \Z$ be the bijection $(x_i,c) \mapsto i+cn$; for each interval $[p,q] \subset \Z$, let $\UQ^{p,q} \subset \UQ$ denote the {\em truncated} subquiver spanned by all vertices satisfying $\rho(x_j,c) \in [p,q]$ and the edges between them.

\begin{ex}\label{ex:An-tilde-unwinding} Here is $\widetilde{\mathbb{A}}_6$ quiver from  Example \ref{ex:An-tilde-reps} depicted above its unwinding. The labels above and below the vertices of the unwinding are  $\rho(\UQ_0)$ and $\UQ_0$ respectively.

\[\begin{tikzpicture}[
roundnode/.style={circle,fill, inner sep=1pt},scale=1,thick,->]

\foreach \i in {-1,...,6}{
    \node[roundnode] (y\i) at (2*\i,0) {};
       \node (\i laby) at (2*\i,0.25) {\footnotesize $y_{\i}$};
    }
    \foreach \i in {0,...,5}{
        \node(\i labxc) at (2*\i,-0.25) {\footnotesize $(x_{\i},0)$};
        }
        
        \node(-1 labx) at (-2,-0.25) {\footnotesize $(x_{5},-1)$};
        \node(6 labxc) at (12,-0.25) {\footnotesize $(x_{0},1)$};
    \foreach \i in {0,...,4}{
         \node[roundnode] (x\i) at (2*\i,2) {};   
         \node (\i labx) at (2*\i,1.75) {\footnotesize $x_{\i}$};
}
             \node[roundnode] (x5) at (4,3.5) {};  
                    \node (5 lab) at (4,3.25) {\footnotesize $x_{5}$};
    \foreach \i in {0,1,3,4}{
       \draw (x\i)  -- (x\the\numexpr \i+1);
      \draw (y\i) -- (y\the\numexpr\i+1);
}
 \draw (x5)  -- (x\the\numexpr0);
  \draw (y5)  -- (y6);
    \draw (y-1)  -- (y0);
    \draw (y3)  -- (y2);
    \draw (x3)  -- (x2);
    
      \node (d-3) at (-2.5,0) {\dots};
    \node (d13) at (12.5,0) {\dots};
\end{tikzpicture}
\]
\end{ex}

As before, we only consider representations whose assigned vector spaces are all finite-dimensional. Such a representation $W \in \Rep(\UQ)$ is said to be {\em periodic} whenever we have: 
\begin{align*}
W_{(x_j,c)}=W_{(x_j,c+1)} & \text{ for all } (x_j,c) \in \UQ_0, \text{ and }\\
W_{(e_j,c)}=W_{(e_j,c+1)} & \text{ for all } (e_j,c) \in \UQ_1.
\end{align*} 
We denote by $\PRep(\UQ)$ the full subcategory of $\Rep(\UQ)$ which consists of all such periodic representations --- crucially, we do not require that the morphisms in this category are periodic. 

\begin{prop}\label{prop:prep-indec}
Up to isomorphism, the indecomposable representations in $\PRep(\UQ)$ are either 
\begin{enumerate} 
 \item the infinite interval module $\I[-\infty,\infty]$, or 
 \item the direct sum $\mathbf{\Sigma}[u,v] := \bigoplus_{c \in \Z}\I[u+cn,v+cn]$ for some interval $[u,v] \subset \Z$.
 \end{enumerate} 
\end{prop}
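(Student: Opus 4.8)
The plan is to analyze an arbitrary indecomposable periodic representation $W \in \PRep(\UQ)$ by first decomposing it as a representation of the underlying type $\mathbb{A}_\infty$ quiver $\UQ$, then exploiting periodicity to organize these summands. By Theorem \ref{thm:A-inf-indec}, since each $W_{(x_j,c)}$ is finite-dimensional, we may write $W \simeq \bigoplus_{[p,q] \in \barc(W)} \I[p,q]^{d_{pq}}$ as a representation of $\UQ$ (ignoring periodicity for the moment). The periodicity hypothesis on the vector spaces and maps of $W$ says precisely that $W$ is fixed, as an object of $\Rep(\UQ)$, by the shift automorphism $\theta$ of $\UQ$ that sends $(x_j,c) \mapsto (x_j,c+1)$; more carefully, $\theta^* W \cong W$ where $\theta^*$ is the induced endofunctor of $\Rep(\UQ)$. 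The key observation is that $\theta^*$ sends $\I[p,q]$ to $\I[p+n,q+n]$ (and fixes $\I[-\infty,\infty]$), so by uniqueness of the Krull--Schmidt-type decomposition in Theorem \ref{thm:A-inf-indec}(2), the barcode $\barc(W)$ together with its multiplicity function must be invariant under the shift $[p,q] \mapsto [p+n,q+n]$.

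From this shift-invariance I would deduce the structure: the multiplicity $d^*_\infty$ of $\I[-\infty,\infty]$ is some nonnegative integer, and the finite intervals in $\barc(W)$ fall into shift-orbits, each of which is a full $\Z$-orbit $\{[u+cn, v+cn] : c \in \Z\}$ with a common multiplicity. Collecting each such orbit gives exactly a summand $\mathbf{\Sigma}[u,v]^{d_{uv}}$, so $W \simeq \I[-\infty,\infty]^{d^*_\infty} \oplus \bigoplus_{[u,v]} \mathbf{\Sigma}[u,v]^{d_{uv}}$ as an object of $\PRep(\UQ)$ — note this is an honest isomorphism in $\Rep(\UQ)$, hence in the full subcategory $\PRep(\UQ)$. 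It remains to check that (a) $\I[-\infty,\infty]$ and each $\mathbf{\Sigma}[u,v]$ genuinely lie in $\PRep(\UQ)$, which is immediate from their explicit description, and (b) they are indecomposable \emph{in $\PRep(\UQ)$}. Since $W$ was assumed indecomposable in $\PRep(\UQ)$, the displayed decomposition forces $W$ to be isomorphic to exactly one of $\I[-\infty,\infty]$ or some $\mathbf{\Sigma}[u,v]$; conversely these are indecomposable because any nontrivial direct sum decomposition in $\PRep(\UQ)$ would, after forgetting to $\Rep(\UQ)$, contradict the uniqueness in Theorem \ref{thm:A-inf-indec} (for $\mathbf{\Sigma}[u,v]$, one observes its $\UQ$-decomposition $\bigoplus_c \I[u+cn,v+cn]$ has indecomposable summands permuted transitively by $\theta$, so it cannot be split by a $\theta$-invariant idempotent — but one must be slightly careful here since $\PRep(\UQ)$ does not require morphisms to be periodic).

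The main obstacle I anticipate is precisely that last subtlety: $\PRep(\UQ)$ is the full subcategory on periodic \emph{objects}, but its morphisms (and hence its idempotents, and hence its notion of direct sum decomposition) are all morphisms in $\Rep(\UQ)$, not just periodic ones. So to prove $\mathbf{\Sigma}[u,v]$ is indecomposable in $\PRep(\UQ)$, I cannot simply invoke a $\theta$-equivariance argument on idempotents; instead I should argue directly that $\End_{\Rep(\UQ)}(\mathbf{\Sigma}[u,v])$ has no nontrivial idempotents whose image is again periodic. Concretely, $\Hom_{\Rep(\UQ)}(\I[a,b], \I[a',b'])$ is at most one-dimensional (nonzero iff $a \le a'$ and the interval containment is "compatible" in the appropriate sense for the given orientation), so $\End(\mathbf{\Sigma}[u,v])$ is a fairly explicit (infinite) matrix algebra, and one checks that a periodic idempotent $e$ acts compatibly with $\theta$, forcing $e$ to be $0$ or $1$ on each shift-orbit and hence globally $0$ or $\id$. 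I would isolate this as the technical heart of the proof and handle the bookkeeping with the shift action carefully, while treating the reduction to Theorem \ref{thm:A-inf-indec} and the orbit-collection step as routine.
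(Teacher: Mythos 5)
Your overall strategy matches the paper's: decompose $W$ in $\Rep(\UQ)$ via Theorem \ref{thm:A-inf-indec}, use periodicity to organize the barcode into $n$-shift orbits, and then argue indecomposability. Your shift-automorphism argument (invariance of $\barc(W)$ under $\theta^*$ by uniqueness of the decomposition) is a perfectly good substitute for the paper's restriction-to-truncations argument, and is arguably cleaner. One small omission: you pass directly from ``the multiplicity function is shift-invariant'' to ``$\barc(W)$ consists of $\I[-\infty,\infty]$ and full $\Z$-orbits of finite intervals,'' without explicitly ruling out the one-sided infinite intervals $[a,\infty]$ and $[-\infty,b]$; these must be excluded (shift-invariance would force infinitely many of them, contradicting $\dim W_y < \infty$), and the paper does this explicitly.

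The more serious issue is in the step you yourself flagged as the ``technical heart'': showing $\mathbf{\Sigma}[u,v]$ is indecomposable in $\PRep(\UQ)$. You correctly observe that morphisms in $\PRep(\UQ)$ are \emph{not} required to be periodic, so a $\theta$-equivariance argument on idempotents is unavailable, and you correctly state the condition to check: that $\End_{\Rep(\UQ)}(\mathbf{\Sigma}[u,v])$ has no nontrivial idempotent whose image \emph{and kernel} are isomorphic to periodic objects. But your concrete plan then reintroduces exactly the hypothesis you just discarded (``a periodic idempotent $e$ acts compatibly with $\theta$''); since $e$ need not be periodic, this argument does not go through. The clean fix is to run your own orbit argument one level up: if $\mathbf{\Sigma}[u,v] \cong A \oplus B$ with $A, B$ nonzero objects of $\PRep(\UQ)$, then by Theorem \ref{thm:A-inf-indec} the barcodes satisfy $\barc(A) \sqcup \barc(B) = \barc(\mathbf{\Sigma}[u,v]) = \{[u+cn,v+cn] : c \in \Z\}$ (with multiplicities), and since $A$ and $B$ are each periodic, $\barc(A)$ and $\barc(B)$ are each shift-invariant. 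A single $\Z$-orbit with multiplicity one cannot be partitioned into two nonempty shift-invariant pieces, so one of $A$, $B$ is zero. This requires no analysis of the endomorphism ring and no discussion of idempotents, and it is essentially the paper's argument (phrased there via restriction to a sufficiently long truncation in which $\I[u,v]$ appears with multiplicity exactly one).
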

\begin{proof}
Since $\PRep(\UQ)$ is a subcategory of $\Rep(\UQ)$, it follows from Theorem \ref{thm:A-inf-indec} that every representation $W$ in $\PRep(\UQ)$ admits an irreducible decomposition in $\Rep(\UQ)$ of the form
\begin{align}\label{eq:Wdecomp}
W \simeq \bigoplus_{[u,v]} \I[u,v]^{d_{u,v}}.
\end{align}
Here the sum ranges over intervals $[u,v] \subset \Z$ lying in the barcode $\barc(W)$, and $d_{u,v}$ is the  multiplicity of each such interval as described in \eqref{eqn:decompo_An}. We first claim that the multiplicity function is $n$-periodic --- namely, for all $[p,q] \subset \Z$, we have $d_{p,q} = d_{p+n,q+n}$. To verify this claim, let $W^{p,q}$ denote the representation of $\UQ^{p,q}$ obtained by restricting $W$ to the available vertices and edges. By \eqref{eq:Wdecomp}, we have
\[
W^{p-1,q+1} \simeq \bigoplus_{[u,v]} \I\big[[u,v]\cap[p-1,  q+1]\big]^{d_{u,v}}
\]
where the sum is over intervals $[u,v] \in \barc(W)$ that have nonempty intersection with $[p-1,q+1]$. The desired $n$-periodicity of $d$ now follows from the fact that the left side remains invariant if $p$ and $q$ are replaced by $p+n$ and $q+n$ respectively. Similarly, $\barc(W)$ can not contain any intervals of the form $[u,\infty]$ or $[-\infty,v]$ --- the inclusion of any $[u,\infty]$ interval would, for instance, force the inclusion of $[u+cn,\infty]$ for all $c \in \Z$, which contradicts the finite dimensionality of $W$. Thus, \eqref{eq:Wdecomp} becomes
\begin{align*}
W &\simeq \I[-\infty,\infty]^{d_{-\infty,\infty}} \oplus \bigoplus_{[u,v]} \left(\bigoplus_{c \in \Z} \I[u+cn,v+cn]\right)^{d_{u,v}}\\
&= \I[-\infty,\infty]^{d_{-\infty,\infty}} \oplus \bigoplus_{[u,v]} \mathbf{\Sigma}[u,v]^{d_{u,v}},
\end{align*}
as desired. Finally, we note that $\I[-\infty,\infty]$ is indecomposable in $\PRep(\UQ)$ because it is indecomposable in the larger category $\Rep(\UQ)$ by Theorem \ref{thm:A-inf-indec}, and that each $\mathbf\Sigma[u,v]$ is indecomposable in $\PRep(\UQ)$ because any nontrivial periodic decomposition of it would produce a nontrivial decomposition of $\I[u,v]$ in a sufficiently long truncation of $\UQ$.
\end{proof}

\begin{rmk}\label{rmk:mult_zigzgag_compute}
The argument given above also establishes that the multiplicity function $\barc ( W)\rightarrow \mathbb N_{>0}$ of $W \in \PRep(\UQ)$  can be recovered from the irreducible decomposition of a sufficiently large but finite zigzag persistence module. More precisely, let $D \in \Z$ be chosen so that for any $[u,v] \in \barc(W)$ there is a $c \in \Z$ satisfying $[u+cn,v+cn] \subset [1,D-1]$. Then there is a one-to-one correspondence between the non-zero summands of the decompositions of $W$ and   the truncation $W^{0,D}$. In fact, $D:=(\dim W_0+2)n$ always suffices: if $[u,v]$ lies in $\barc(W)$, then we have
\[
\#([u,v] \cap n\Z) \leq \dim W_0 \quad \text{ and } \quad v-u < (\dim W_0 + 1)n=D-n,
\] so the translation  of $[u,v]$ by $n\Z$ whose starting point is in $[1,n]$ is included in $[1,D-1]$.
\end{rmk}

There is an evident {\bf lift} map that sends objects of $\Rep(Q)$ to those of $\Rep(\UQ)$ defined as follows. The lift $\cL V$ of a representation $V$ of $Q$ assigns $V_{x_j}$ to every vertex in $\UQ_0$ of the form $(x_j,c)$ and similarly $V_{e_j}$ to every edge of the form $(e_j,c)$ in $\UQ_1$. We note that $\cL V$ is called the {\em infinite cyclic covering} in \cite{burghelea}. 

\begin{ex} Continuing Examples \ref{ex:An-tilde-reps} and \ref{ex:An-tilde-unwinding}, here are the representation $\mathbf N[1,9]$ of the $\widetilde{\mathbb{A}}_6$ quiver from example  \ref{ex:An-tilde-reps} and its lift $\cL (\mathbf N[1,9])$. As before, unlabelled edges carry identity maps:
\[
\xymatrixcolsep{.3in}
\xymatrixrowsep{.25in}
\xymatrix{
&&&& & \mathbb{K} \ar@{->}[dll]& & \\
&&&\mathbb{K} \ar@{->}[r]_{\left[\begin{smallmatrix}0\\1\end{smallmatrix}\right]}& \mathbb{K}^2  \ar@{->}[r] & \mathbb{K}^2   & \mathbb{K}^2  \ar@{->}[r]_-{[1 ~ 0]} \ar@{->}[l] & \mathbb{K} \ar@{->}[ull]\\ \\
\cdots\ar@{->}[r]_-{[1 ~ 0]}&\mathbb K\ar@{->}[r]&\ar@{->}[r]\mathbb K&\mathbb{K} \ar@{->}[r]_{\left[\begin{smallmatrix}0\\1\end{smallmatrix}\right]}& \mathbb{K}^2  \ar@{->}[r] & \mathbb{K}^2   & \mathbb{K}^2  \ar@{->}[r]_-{[1 ~ 0]} \ar@{->}[l] & \mathbb{K} \ar@{->}[r]&\mathbb{K} \ar@{->}[r]&\mathbb{K} \ar@{->}[r]_{\left[\begin{smallmatrix}0\\1\end{smallmatrix}\right]}&\cdots
}
\]
\end{ex}

Given a morphism $f:V \to V'$ in $\Rep(Q)$, the lifted morphism $\cL f: \cL V \to \cL V'$ is given by 
\[
\cL f_{(x_j,c)} = f_{x_j} \text{ for all }c \in \Z.
\]
Thus, the morphisms of $\PRep(\UQ)$ lying in the image of $\cL$ are always periodic even though -- as mentioned above -- morphisms in $\PRep(Q)$ are not periodic in general. As a consequence, we can not expect  indecomposable representations of the form $\N[u,v]$ and $\T[\lambda;w]$ from Definition \ref{def:indAtilde} to lift to indecomposable representations in $\PRep(\UQ)$ from Proposition \ref{prop:prep-indec}. However, we have the following convenient result.

\begin{prop}\label{prop:image_indec_on_zigzag} Let $\cL:\Rep(Q) \to \PRep(\UQ)$ be the lift functor. Then,
\begin{enumerate} 
\item for every interval $[u,v] \subset \Z$, we have
\[
\cL(\N[u,v]) = \mathbf{\Sigma}[u,v],
\]
where the right side is as defined in Proposition \ref{prop:prep-indec}. And moreover,
\item for each  field element $\lambda \neq 0$ in $\K$ and integer $w \geq 1$, we have
\[
\cL(\T[\lambda;w]) = \I[-\infty,\infty]^w.
\]
\end{enumerate}
\end{prop}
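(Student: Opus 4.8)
The plan is to compute the lift directly from the explicit descriptions of $\N[u,v]$ and $\T[\lambda;w]$ in Definition \ref{def:indAtilde}, and then recognise the resulting $\Rep(\UQ)$-object by applying the decomposition theorem for $\mathbb{A}_\infty$ quivers (Theorem \ref{thm:A-inf-indec}). For part (2) this is almost immediate: $\cL(\T[\lambda;w])$ assigns $\mathbb{K}^w$ to every vertex of $\UQ$, the identity to every lifted edge $(e_j,c)$ with $j\neq 0$, and the Jordan block $J_\lambda$ (with $\lambda\neq 0$, hence invertible) to every lifted copy of $e_0$. Since every edge map is an isomorphism, this representation is a direct sum of $w$ copies of the representation assigning $\mathbb{K}$ to every vertex with all edge maps invertible scalars; after the obvious change of basis at each vertex (absorbing the scalars), each summand is isomorphic to $\I[-\infty,\infty]$. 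By Theorem \ref{thm:A-inf-indec}(2) the interval decomposition is unique, so $\cL(\T[\lambda;w]) = \I[-\infty,\infty]^w$, as claimed.

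For part (1), I would first observe that $\cL(\N[u,v])$ has, at the vertex $(x_j,c)$ with $\rho(x_j,c)=j+cn$, the vector space $\mathbb{K}^{\ell+1}$ if $j\in[u,v]\bmod n$ and $\mathbb{K}^\ell$ otherwise (where $\ell=\lfloor (v-u)/n\rfloor$), with edge maps given by the periodic repetition of the matrices from Definition \ref{def:indAtilde}. On the other side, $\mathbf\Sigma[u,v]=\bigoplus_{c\in\Z}\I[u+cn,v+cn]$ assigns to the vertex with $\rho$-value $m$ the space $\mathbb{K}$ for each $c$ with $m\in[u+cn,v+cn]$, i.e. $\mathbb{K}^{\#\{c : u+cn\le m\le v+cn\}}$; one checks this count equals $\ell+1$ exactly when $m\bmod n\in[u,v]\bmod n$ and $\ell$ otherwise, so the dimension vectors match. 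It then suffices to verify that the edge maps agree, which reduces (by periodicity and the uniqueness in Theorem \ref{thm:A-inf-indec}) to exhibiting a basis change at each vertex of $\cL(\N[u,v])$ making the edge maps into the "staircase" identity-and-zero blocks that $\bigoplus_c \I[u+cn,v+cn]$ has. Concretely, I would index a basis of $\cL(\N[u,v])_{(x_j,c)}$ by the set of translates $c'$ with $u+c'n \le j+cn \le v+c'n$, and check that the block matrices $[0~\mathrm{id}_\ell]$, $[\mathrm{id}_\ell~0]^T$, and $\left[\begin{smallmatrix}0&\mathrm{id}_\ell\\0&0\end{smallmatrix}\right]$ (and their transposes, for counterclockwise edges) precisely implement, on these bases, the identity on each $\I[u+c'n,v+c'n]$ where both endpoints are nonzero and zero where one endpoint vanishes. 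The reason the three exceptional matrix shapes appear is that passing the edge $e_j$ with $j\equiv u$ starts a new interval summand, $j\equiv v+1$ ends one, and $j\equiv u\equiv v+1$ (only when $n\mid v-u+1$... here rather $v-u+1<n$ cannot happen with both; the genuine case is when the same residue is both an $u$-type and $v+1$-type boundary) does both at once — matching exactly the bookkeeping of which interval summands are "alive" on each side of that edge.

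The main obstacle is the last bookkeeping step in part (1): keeping the indexing of summands, the $\bmod\ n$ residues, and the orientation-dependent transposes all consistent, so that the explicit matrices of Definition \ref{def:indAtilde} are seen to be block-diagonalisable into the sum of interval modules. This is purely a linear-algebra verification with no conceptual difficulty once the basis is chosen correctly, and it can be streamlined by invoking the restriction principle (as in Remark \ref{rem:recursive} and the proof of Proposition \ref{prop:prep-indec}): it is enough to check the claim on a single fundamental domain $\UQ^{p,q}$ of length exceeding $v-u$, where $\cL(\N[u,v])$ restricts to a finite type $\mathbb{A}$ representation whose barcode can be read off directly and compared with the restriction of $\mathbf\Sigma[u,v]$; periodicity then propagates the identification to all of $\UQ$, and uniqueness in Theorem \ref{thm:A-inf-indec} closes the argument.
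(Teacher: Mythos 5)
Your proposal is correct and follows the same route as the paper: both parts are established by inspecting the explicit matrices from Definition \ref{def:indAtilde} and invoking the interval-decomposition theorem for type $\mathbb{A}_\infty$ quivers. For part~(1) the paper compresses the bookkeeping you describe into the single observation that the matrices of $\N[u,v]$ are "already in barcode form" (citing an external definition), from which the decomposition of $\cL\N[u,v]$ into the shifted intervals $\I[u+cn,v+cn]$ is read off directly; and for part~(2) the paper likewise notes that all edge maps of $\T[\lambda;w]$ are isomorphisms, so Theorem \ref{thm:A-inf-indec} yields a change of basis making them identities.
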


\begin{proof}
The matrices in the description of $\N[u,v]$ from Definition \ref{def:indAtilde} are already in barcode form (see \cite[Definition 2.2]{jacquard2022space}), so the desired interval decomposition of ${\cL}\N[u,v]$ can be directly inferred from these matrices. Moreover, since all the matrices in the description of $\T[\lambda;w]$ are isomorphisms, it follows from Theorem \ref{thm:A-inf-indec} that there exists a change of basis which turns them all into identities. 
\end{proof}

\section{HN Filtrations of Affine Type $\widetilde{\mathbb{A}}$ Quivers}\label{sec:HN_of_An_tilde}

Let $Q$ be a quiver of type $\widetilde{\mathbb{A}}_n$ as defined in \eqref{eq:Antilde}, and fix a representation $V \neq 0$ of $Q$ valued in vector spaces over an algebraically closed field $\K$. Our goal here is to relate the HN filtration of $V$ along the Euler stability condition (from Definition \ref{def:eulerstab}) to the barcode decomposition of the lifted representation $\cL V \in \Rep(\UQ)$ which has been described in Proposition \ref{prop:prep-indec}. 

The first task at hand is to compute the Euler slopes of all indecomposables in $\Rep(Q)$ --- we recall from Theorem \ref{thm:an-tilde-indec} that these have form $\N[u,v]$ and $\T[\lambda;w]$, both of which have been described in Definition \ref{def:indAtilde}. There is a single number $p_{u,v} \in \set{0,1,2}$ which determines the sign of the Euler slope of each $\N[u,v]$, and it admits the following graphical description (with $u' := u \text{ mod } n$ and $v' := v \text{ mod } n$):
  \[
\begin{tikzpicture}[
roundnode/.style={circle,draw, inner sep=1.5pt},scale=0.75,thick,->]
  \node[roundnode] (0) at (0,0) {};
          \node[roundnode,label=above:$x_{u'}$](1) at (1,0) {};
    \node[roundnode] (2) at (2,0) {};
            \node[roundnode](3) at (3.5,0) {};
        \node[roundnode,label=above:$x_{v'}$](4) at (4.5,0) {};
          \node[roundnode](5) at (5.5,0) {};
                    \node (lab) at (2.75,-1){$p_{u,v}=0$};
    \draw[->] (1) to (0);
       \draw[-] (1) to (2);
          \draw[opacity=0] (2) to node[opacity=1]{$\dots$} (3);
          \draw[-] (3) to (4);
          \draw[->] (4) to (5);
    \end{tikzpicture}\qquad
    \begin{tikzpicture}[
roundnode/.style={circle,draw, inner sep=1.5pt},scale=0.75,thick,->]
  \node[roundnode] (0) at (0,0.25) {};
          \node[roundnode,label=above:$x_{u'}$](1) at (1,0.25) {};
    \node[roundnode] (2) at (2,0.25) {};
            \node[roundnode](3) at (3.5,0.25) {};
        \node[roundnode,label=above:$x_{v'}$](4) at (4.5,0.25) {};
          \node[roundnode](5) at (5.5,0.25) {};
            \node[roundnode] (0') at (0,-0.25) {};
          \node[roundnode](1') at (1,-0.25) {};
    \node[roundnode] (2') at (2,-0.250) {};
            \node[roundnode](3') at (3.5,-0.25) {};
        \node[roundnode](4') at (4.5,-0.25) {};
          \node[roundnode](5') at (5.5,-0.25) {};
                    \node (lab) at (2.75,-1){$p_{u,v}=1$};
    \draw[->] (0) to (1);
       \draw[-] (1) to (2);
          \draw[opacity=0] (2) to node[opacity=1]{$\dots$} (3);
          \draw[-] (3) to (4);
          \draw[->] (4) to (5);
              \draw[->] (1') to (0');
       \draw[-] (2') to (1');
          \draw[opacity=0] (2') to node[opacity=1]{$\dots$} (3');
          \draw[-] (3') to (4');
          \draw[->] (5') to (4');
    \end{tikzpicture}\qquad 
    \begin{tikzpicture}[
roundnode/.style={circle,draw, inner sep=1.5pt},scale=0.75,thick,->]
  \node[roundnode] (0) at (0,0) {};
          \node[roundnode,label=above:$u'$](1) at (1,0) {};
    \node[roundnode] (2) at (2,0) {};
            \node[roundnode](3) at (3.5,0) {};
        \node[roundnode,label=above:$v'$](4) at (4.5,0) {};
          \node[roundnode](5) at (5.5,0) {};
          \node (lab) at (2.75,-1){$p_{u,v}=2$};
    \draw[->] (0) to (1);
       \draw[-] (1) to (2);
          \draw[opacity=0] (2) to node[opacity=1]{$\dots$} (3);
          \draw[-] (3) to (4);
          \draw[->] (5) to (4);
    \end{tikzpicture}
\]
Explicitly, $p_{u,v}$ counts whether the edge $e_{u'}$ has target $u'$ and whether the edge $e_{v'+1}$ has target $v'$. The cases $p_{u,v} = 0, 1, 2$ are equivalent \cite[Sec 11.3]{gabriel1992representations} to $\N[u,v]$ being called  respectively pre-injective, regular and pre-projective in standard texts \cite{kirillov_quiver, schiffler2014quiver}. 

\begin{prop}\label{prop:An-tilde-euler}
The Euler slopes of indecomposable representations in $\Rep(Q)$ are given as follows.
\begin{enumerate}
    \item For each $[u,v] \subset \Z$, we have
    \[
    \phi_\epsilon(\N[u,v]) = \frac{1-p_{u,v}}{v-u+1}.
    \]
    \item For each nonzero $\lambda \in \K$ and integer $w \geq 1$, we have
    \[
    \phi_\epsilon(\T[\lambda;w]) = 0.
    \]
\end{enumerate}
\end{prop}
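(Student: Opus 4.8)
The strategy is to read both Euler slopes off the explicit dimension vectors of Definition~\ref{def:indAtilde}, using as the single structural input the fact that a type $\widetilde{\mathbb{A}}_n$ quiver has exactly $n$ vertices and $n$ edges, so that
\[
\sum_{x \in Q_0}\epsilon(x) \;=\; \#Q_0 - \#Q_1 \;=\; n - n \;=\; 0 .
\]
More generally, for any subset $T \subseteq Q_0$ one has $\sum_{x\in T}\epsilon(x) = \#T - \#\{e \in Q_1 : t(e) \in T\}$, and it is this identity, specialised to suitable arcs of the underlying $n$-cycle, that will do the combinatorial work.

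Part (2) is then immediate: the representation $\T[\lambda;w]$ assigns $\K^w$ to every vertex, so the numerator of $\phi_\epsilon(\T[\lambda;w])$ is $w\sum_{x}\epsilon(x) = 0$ while its denominator $wn$ is positive.

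For part (1), I would first record (after normalising $u$ to lie in $\set{0,\dots,n-1}$, and writing $u'=u$, $v' = v \bmod n$, $r := (v-u+1)-n\ell$, so that $1 \le r \le n$) that the dimension vector of $\N[u,v]$ equals $\ell+1$ on the arc $S = \set{x_{u'},x_{u'+1},\dots,x_{v'}}$ of $r$ consecutive vertices and $\ell$ elsewhere; in particular $\dim_{\mathrm{tot}}\N[u,v] = n\ell+r = v-u+1$. Substituting into the Euler slope formula and using $\sum_x\epsilon(x)=0$ collapses the numerator to $\sum_{x\in S}\epsilon(x)$, so it remains to prove
\[
\sum_{x \in S}\epsilon(x) \;=\; 1 - p_{u,v},
\]
which by the displayed identity above is the same as $\#\{e : t(e)\in S\} = (\#S-1)+p_{u,v}$. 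When $S$ is a proper arc of the $n$-cycle ($r<n$) it has $\#S-1$ internal edges, all with target in $S$, together with exactly two boundary edges, namely $e_{u'}$ (joining $x_{u'-1}\notin S$ to $x_{u'}\in S$) and $e_{v'+1}$ (joining $x_{v'}\in S$ to $x_{v'+1}\notin S$); each boundary edge has target in $S$ exactly when it points towards its endpoint lying in $S$, i.e.\ when $t(e_{u'})=x_{u'}$, resp.\ when $t(e_{v'+1})=x_{v'}$, and the number of these two events is precisely $p_{u,v}$ by the graphical description preceding the statement. This gives the identity, and hence $\phi_\epsilon(\N[u,v]) = (1-p_{u,v})/(v-u+1)$.

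The step I would be most careful about is the degenerate shapes of the arc $S$. When $r=1$ the two ``boundary'' edges $e_{u'}$ and $e_{u'+1}$ are simply the two distinct edges incident to the single vertex $x_{u'}$, and the count reduces to $p_{u,v} = \deg_{\mathrm{in}}(x_{u'})$. When $r=n$ the arc is all of $Q_0$, there are no boundary edges, and one argues directly that $\sum_{x\in S}\epsilon(x)=\sum_x\epsilon(x)=0$; here $v'+1\equiv u'\pmod n$, so $e_{v'+1}=e_{u'}$ is a single edge meeting exactly one of the two conditions defining $p_{u,v}$, giving $p_{u,v}=1$, consistently with $1-p_{u,v}=0$. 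Matching the paper's edge-labelling convention ($e_j$ joins $x_{j-1}$ and $x_j$ modulo $n$) against the indices $u'$ and $v'+1$ in the definition of $p_{u,v}$ likewise needs a moment's attention, but no real work.
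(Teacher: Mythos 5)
Your proof is correct, and it arrives at the same quantities by a more elementary route than the paper. Where the paper packages the numerator of $\phi_\epsilon(\N[u,v])$ as the sheaf Euler characteristic $\chi(X;\bS\N[u,v])$, decomposes the underlying circle $X$ into the two arcs $A$ (dimension $\ell+1$) and $B$ (dimension $\ell$), and invokes the topological fact that $\chi(A)\in\{+1,0,-1\}$ according as $A$ is closed, clopen, or open, you strip the sheaf language away entirely: you use $\sum_x\epsilon(x)=0$ to collapse the numerator to $\sum_{x\in S}\epsilon(x)$ and then compute this directly as $\#S-\#\{e:t(e)\in S\}$ via an edge count distinguishing internal from boundary edges. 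These are in fact the same number — the cells of dimension $\ell+1$ in the paper's $A$ are exactly the vertices of $S$ together with the edges whose target lies in $S$, so $\chi(A)=\sum_{x\in S}\epsilon(x)$ — but your version replaces the open/clopen/closed case analysis with a transparent count of $p_{u,v}$ as the number of boundary edges pointing inward, and it has the virtue of making the degenerate cases $r=1$ and $r=n$ (which the paper's ``locally constant on two disjoint intervals'' phrasing glosses over, since for $r=n$ the set $B$ is empty) explicit and easy to verify. The treatment of part (2) is identical in both.
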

\begin{proof}
The second assertion follows from the following elementary calculation: for each $\lambda$ and $w$ as above, by Definitions \ref{def:indAtilde} and \ref{def:eulerstab} we have
\[
\phi_\epsilon(\T[\lambda;w]) = \frac{\sum_{x \in Q_0} (1-\deg_\text{in}(x)) \cdot w}{\sum_{x \in Q_0} w}. 
\]
Since $\sum_{x \in Q_0} \deg_\text{in}(x)$ equals the cardinality of the edge set, we obtain 
\[
\phi_\epsilon(\T[\lambda;w]) = \frac{\#Q_0 - \#Q_1}{\#Q_0}.
\]
The numerator on the right side vanishes, as desired, because both cardinalities equal $n$. 

Turning now to $\N[u,v]$, by the discussion preceding Definition \ref{def:eulerstab} we have
\begin{align} \label{eq:Nslope}
\phi_\epsilon(\N[u,v]) = \frac{\chi(X;\bS\N[u,v])}{\sum_{x \in Q_0} \dim \N[u,v]_x}.
\end{align}
Here $X$ denotes the underlying graph of $Q$, while $\bS:\Rep(Q) \to \textbf{Shf}(X)$ is the associated sheaf functor from Definition \ref{def:repsheaf}. This associated sheaf is locally constant on a decomposition of $X$ into two disjoint intervals $X = A \sqcup B$, whose exact nature depends on the value of $p_{u,v}$. In particular, setting $\ell := \lfloor\frac{v-u}{n}\rfloor$, let $A \subset X$ be the union of cells on which $\N[u,v]$ has dimension $\ell+1$, and let $B$ be the complement of $A$ in $X$ (on which $\N[u,v]$ necessarily has dimension $\ell$). Whether $A$ is closed, clopen or open in $X$ depends on $p_{u,v}$ being $0, 1$ or $2$. Therefore, $A$'s Euler characteristic is
\[
\chi(A) = \begin{cases}
            +1 & p_{u,v} = 0, \\
        \hspace{.13in}0 & p_{u,v} = 1, \\
            -1 & p_{u,v} = 2.
          \end{cases}
\]
Equivalently, we have $\chi(A) = 1-p_{u,v}$. 

Now the numerator in \eqref{eq:Nslope} is given by
\[
\chi(X;\bS\N[u,v]) = \chi(A;\underline{\K^{\ell+1}}_A) + \chi(B;\underline{\K^\ell}_B),
\]
where $\underline{\K^{\ell+1}}_A$ is the constant $\K^{\ell+1}$-valued sheaf on $A$, etc. Thus,
\[
\chi(X;\bS\N[u,v]) = \chi(A) \cdot (\ell+1) + \chi(B) \cdot \ell.
\]
By additivity of the Euler characteristic, we have $\chi(B) = -\chi(A)$ since $X$ is homeomorphic to a circle and satisfies $\chi(X) = 0$. Therefore, $\chi(X;\bS\N[u,v])$ equals $\chi(A)$, which in turn equals $1-p_{u,v}$. Finally, the denominator in \eqref{eq:Nslope} is 
\begin{align*}
& (\ell+1) \cdot (v'-u'+1) + \ell \cdot (n-(v'-u'+1)) \\
=& (v'-u'+1)+\ell n,
\end{align*} which, by definition of $\ell$, simplifies to $v-u+1$ as desired.
\end{proof}

Next, we seek to establish that all the indecomposable representations in $\Rep(Q)$ are $\epsilon$-semistable. In light of Lemma \ref{lmm:semi-stable_intervals}, it suffices to show that the inequality $\phi_\epsilon(I) \leq \phi_\epsilon(I')$ holds whenever we have an injective morphism $I \hookrightarrow I'$ between a pair of indecomposables in $\Rep(Q)$. There are only three nontrivial cases of $I \hookrightarrow I'$ to consider, which -- based on the slope computations from Proposition \ref{prop:An-tilde-euler} -- we call $(++)$, $(0-)$ and $(--)$. Explicitly,
\begin{itemize}
    \item[$(++)$] has $\N[u,v] \hookrightarrow \N[\bu,\bv]$ with $p_{u,v} = 0 = p_{\bu,\bv}$,
    \item[$(0-)$] has $\T[\lambda;w] \hookrightarrow \N[u,v]$ with $p_{u,v} = 2$,
    \item[$(--)$] has $\N[u,v] \hookrightarrow \N[\bu,\bv]$ with $p_{u,v} = 2 = p_{\bu,\bv}$.
\end{itemize}

\begin{rmk}\label{rem:injhed}
If we have an injective morphism $U \hookrightarrow V$ in $\Rep(Q)$, then for every edge $e \in Q_1$ the linear map $U_e$ is precisely the restriction of $V_e$. It follows that if $V_e$ is injective, then so is $U_e$. We note that all the edges are assigned injective maps in $\T[\lambda;w]$; but exactly $2-p_{u,v}$ edges are assigned a map with a nontrivial kernel by $\N[u,v]$. Therefore, the case $(+0)$ involving $\N[u,v] \hookrightarrow \T[\lambda;w]$ for $p_{u,v} = 0$ never arises. And similarly, all the cases involving $\N[u,v] \hookrightarrow \N[\bu,\bv]$ with $p_{u,v} < p_{\bu,\bv}$ are impossible.
\end{rmk}

The following proposition can be seen as a corollary of \cite[Prop 5.2]{stable_quiver}. We prove it here without using any property of tame hereditary algebras except the decomposition from definition \ref{def:indAtilde}.

 \begin{prop}\label{prop:ss_An_tilde}
 The indecomposable representations of $Q$ are all $\epsilon$-semistable.
 \end{prop}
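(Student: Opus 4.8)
The plan is to reduce the claim, via Lemma~\ref{lmm:semi-stable_intervals} (whose proof only used the antitone property and worked by decomposing an arbitrary subobject into indecomposables), to the statement that whenever $I \hookrightarrow I'$ is an injective morphism between two indecomposables of $\Rep(Q)$, one has $\phi_\epsilon(I) \leq \phi_\epsilon(I')$. More precisely, if $U \subset I'$ is an arbitrary nonzero subrepresentation of an indecomposable $I'$, then decomposing $U = \bigoplus I_k$ into indecomposables and applying Lemma~\ref{lmm:seq_and_slope}(1) inductively gives $\phi_\epsilon(U) \leq \max_k \phi_\epsilon(I_k)$; each $I_k$ embeds in $I'$ (being a summand of $U$), so it suffices to bound $\phi_\epsilon(I_k) \leq \phi_\epsilon(I')$ for each indecomposable summand. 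Thus the whole proposition comes down to checking the three nontrivial embedding patterns $(++)$, $(0-)$ and $(--)$ identified just before the statement; Remark~\ref{rem:injhed} has already ruled out every embedding that would go the ``wrong way'' in slope, so no other case can occur.

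For the three surviving cases I would argue directly from the slope formula of Proposition~\ref{prop:An-tilde-euler}. In case $(0-)$ we have $\phi_\epsilon(\T[\lambda;w]) = 0$ and $\phi_\epsilon(\N[u,v]) = -1/(v-u+1) < 0$ wait --- this is the wrong direction, so in fact I must instead exhibit the embedding as $\T[\lambda;w]\hookrightarrow \N[u,v]$ with target slope $0$ impossible; rather, the correct reading (consistent with the labels) is that the source has the \emph{smaller} slope, i.e. $(0-)$ really records $\N[u,v]\hookrightarrow$ something, so I would recheck signs against Remark~\ref{rem:injhed} and keep only the genuinely possible inclusions. Modulo that bookkeeping: in $(++)$ both modules are pre-injective with positive slopes $1/(v-u+1)$ and $1/(\bv-\bu+1)$, and an injection forces a containment of the underlying (wrapped) intervals that makes $\bv-\bu \geq v-u$, hence the source slope is $\geq$ the target slope --- but we need $\leq$, so again I must be careful about which of the two is the sub. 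The honest approach is: enumerate, for each of $p=0,1,2$, exactly which inclusions $I\hookrightarrow I'$ are permitted by the kernel-count constraint of Remark~\ref{rem:injhed}, read off the induced relation between the interval lengths (and, for $\T$, between $w$ and the relevant dimensions), and then verify the slope inequality from the formulas $\phi_\epsilon(\N[u,v]) = (1-p_{u,v})/(v-u+1)$ and $\phi_\epsilon(\T[\lambda;w])=0$.

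Concretely, the key steps in order are: (i) invoke Lemma~\ref{lmm:semi-stable_intervals}'s proof scheme to reduce to injections between indecomposables; (ii) use Remark~\ref{rem:injhed} to cut the list of possible injections down to the $(++)$, $(0-)$, $(--)$ patterns; (iii) for an injection $\N[u,v]\hookrightarrow\N[\bu,\bv]$, analyse what the commuting-square condition forces on the wrapped intervals --- essentially that $[u,v]$ sits inside $[\bu,\bv]$ after aligning the distinguished edges $e_{u'}, e_{v'+1}$ --- yielding $v-u \leq \bv-\bu$ together with $p_{u,v} \geq p_{\bu,\bv}$ (both endpoints of the sub are ``at least as injective''); (iv) feed these into the slope formula: when $p_{u,v}=p_{\bu,\bv}=2$ both slopes are $\leq 0$ and the longer interval $[\bu,\bv]$ gives the slope closer to $0$, i.e. $\phi_\epsilon(\N[u,v]) = -1/(v-u+1) \leq -1/(\bv-\bu+1) = \phi_\epsilon(\N[\bu,\bv])$; when $p_{u,v}=p_{\bu,\bv}=0$ both are $>0$ and the same length comparison gives the inequality in the required direction; and (v) for $\T[\lambda;w]\hookrightarrow\N[u,v]$, note $p_{u,v}$ must be $2$ (by the kernel count, since $\T$ has all edge-maps injective, the target can have at most... — here one uses that an injection $\T\hookrightarrow\N$ restricts to injections on each edge, which is compatible only with $p_{u,v}=2$, wait: $\T$ has $0$ non-injective edges so $I\hookrightarrow\N[u,v]$ imposes nothing — rather it is $\N[u,v]\hookrightarrow\T$ that is constrained), so the genuine case is $\T[\lambda;w]\hookrightarrow\N[u,v]$ only when the defect sign works out, giving $0 = \phi_\epsilon(\T[\lambda;w]) \leq \phi_\epsilon(\N[u,v])$ precisely when $p_{u,v}\leq 1$. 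The main obstacle I anticipate is step (iii): carefully translating ``there is an injective morphism of these explicit matrix representations'' into the clean combinatorial statement about nested wrapped intervals and the monotonicity $p_{u,v}\geq p_{\bu,\bv}$, since $\N[u,v]$ is not simply an interval module and one must track the two exceptional edge-maps and the off-by-one subtleties in the $\lfloor (v-u)/n\rfloor$ dimension jumps; everything after that is a short substitution into the slope formula.
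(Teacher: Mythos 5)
Your reduction to the three embedding patterns $(++)$, $(0-)$, $(--)$ via the antitone/decomposition scheme of Lemma~\ref{lmm:semi-stable_intervals} and Remark~\ref{rem:injhed} matches the paper, and your treatment of $(--)$ by raw dimension count is exactly right. But your plan has a genuine gap precisely where you notice something is wrong and then wave it away.

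In case $(++)$, the total-dimension inequality indeed gives $v-u+1 \leq \bv-\bu+1$, but for $p=0$ the slope is $\tfrac{1}{\text{length}}$, so a \emph{strictly} shorter subobject would have \emph{strictly larger} slope than the ambient --- which would disprove the proposition, not prove it. You flag this mid-proposal (``hence the source slope is $\geq$ the target slope --- but we need $\leq$''), yet step~(iv) then asserts the inequality ``in the required direction'' anyway. What you actually need is that the lengths are \emph{equal}: the paper proves $c=0$ (in the parametrisation $\bv = v+cn$, $c\geq 0$) by lifting both modules to $\PRep(\UQ)$ via Proposition~\ref{prop:image_indec_on_zigzag}, truncating to a finite type $\mathbb{A}$ subquiver, and invoking the commuting-square obstruction~\eqref{eq:barinc} to force $v=\bv$. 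No amount of combinatorial ``nesting of wrapped intervals'' inside $Q$ itself gives you this --- you need the unwinding. Similarly, in case $(0-)$ you oscillate between several misreadings of the label but never land on the crucial fact: an injection $\T[\lambda;w]\hookrightarrow\N[u,v]$ with $p_{u,v}=2$ is simply \emph{impossible}, because under $\cL$ it would give an injection $\I[-\infty,\infty]^w \hookrightarrow \mathbf{\Sigma}[u,v]$, and an infinite interval module cannot embed in a direct sum of finite ones (again by~\eqref{eq:barinc}). Your step~(v) tacitly assumes this impossibility (``only when $p_{u,v}\leq 1$'') without any argument. So the proposal is missing the paper's central technical device --- lifting indecomposables to the universal cover $\UQ$ and using barcode rigidity there --- and without it the two hard cases do not close.
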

 \begin{proof}
 We treat each of the three cases separately.
 
 \medskip
 
 {\bf Case} $(++)$: The injectivity requirement from Remark \ref{rem:injhed} forces both $u=\bu\text{ mod }n$ and $v = \bv \text{ mod }n$, and (translating if necessary) we may as well assume that $\bu=u$ and $\bv=v+cn$ for some $c \in \Z$. If $c$ is negative, then $(v-u-1)$ exceeds $(\bv-u-1)$, which implies $\phi_\epsilon(\N[u,v]) < \phi_\epsilon(\N[u,\bv])$ by Proposition \ref{prop:An-tilde-euler}. Thus, we assume that $c$ is non-negative and seek to verify that $c=0$. Consider the lifts 
 \[
 W := \cL(\N[u,v]) \quad \text{and} \quad W' := \cL(\N[u,\bv])
 \] which are defined on the unwinding $\UQ$. By Proposition \ref{prop:image_indec_on_zigzag}, we have $W = {\bf \Sigma}[u,v]$ and $W' = {\bf \Sigma}[u,\bv]$. The injective map $\N[u,v] \hookrightarrow \N[u,\bv]$ in $\Rep(Q)$ lifts to an injective map $\iota:W \hookrightarrow W'$ in $\PRep(\UQ)$. Restricting to the {\em finite} type $\mathbb{A}$ subquiver  obtained by truncating $\UQ$ to $[\rho^{-1}(u)-1,\rho^{-1}(\bv)+1]$, we obtain a distinguished interval $[u,v]$ in $\barc(W)$ that is generated by the kernel of $W_{\rho^{-1}(u)}$. Similarly, there is an interval $[u,\bv]$ in $\barc(W')$ generated by the kernel of $W'_{\rho^{-1}(u)}$. The fact that $\iota$ induces an injective map $\I[u,v] \hookrightarrow \I[u,\bv]$ in $\Rep(Q')$ now forces $v = \bv$ by \eqref{eq:barinc}. Thus, $\N[u,v]$ and $\N[\bu,\bv]$ have equal slopes by Proposition \ref{prop:An-tilde-euler}.
 
 \medskip
 
 {\bf Case} $(0-)$: Proceeding as in the previous case, we define the relevant lifts $W$ of $\T[\lambda;w]$ and $W'$ of $\N[u,v]$. By Proposition \ref{prop:image_indec_on_zigzag}, we have
 \[
 W = \I[-\infty,\infty]^w \quad \text{and} \quad W' = {\bf \Sigma}[u,v].
 \]
 We now claim that there is in fact no injective map $\T[\lambda;w] \hookrightarrow \N[u,v]$ in $\Rep(Q)$; if such a map existed, it would induce an injective map $W \hookrightarrow W'$ in $\PRep(\UQ)$. But by the same reasoning as in \eqref{eq:barinc}, no infinite interval can map injectively to a direct sum of finite intervals.
 
 \medskip
 
 {\bf Case} $(--)$: Since $\N[u,v]$ is a sub-representation of $\N[\bu,\bv]$ by assumption, we have the inequality $\dim \N[u,v]_x \leq \dim \N[\bu,\bv]_x$ for every vertex $x \in Q_0$. By Definition \ref{def:indAtilde}, these inequalities force $(\bv-\bu+1) \leq (v-u+1)$, whence Proposition \ref{prop:An-tilde-euler} guarantees that we have $\phi_\epsilon(\N[u,v]) \leq \phi_\epsilon(\N[\bu,\bv])$ as desired.
 \end{proof}
 
Here is our main result, which is Theorem ({\bf B}) from the Introduction.

\begin{thm}\label{thm:main}
Let $Q$ be an acyclic quiver of type $\widetilde{\mathbb{A}}_n$ and let $V$ be a representation of $Q$ over an algebraically closed field. Let $d^{*}:\barc(\cL V)\rightarrow \mathbb N_{>0}$ be the multiplicity function of the lift of $V$, and let $\eta(V):=\left(\udim_{V^j/V^{j-1}}\right)_{j}$ be the dimension vectors of successive quotients in the HN filtration of $V$ along $\epsilon$. Then,
\begin{enumerate}[label=(\roman*)]
    \item $d^{*}$ and $\eta(V)$ can  be computed in time $\mathcal O(\Delta^4 n)$ where $\Delta := \max_{x\in Q_0} \dim V_x$.
    \item For $[u,v]\subset \Z$, the multiplicity of $\mathbf N[u,v]$ and its appearance in the irreducible decomposition of $V$ can be retrieved from $d^{*}$. 
    \item For $[u,v]\subset \Z$ satisfying $p_{u,v}\neq 1$, the multiplicity of $\mathbf N[u,v]$ in the irreducible decomposition of $V$ can be retrieved from $\eta(V)$. 
\end{enumerate}
\end{thm}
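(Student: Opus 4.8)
The plan is to deduce all three parts from a single structural fact: \emph{the HN filtration of $V$ along $\epsilon$ merely sorts $V$'s indecomposable summands by their $\epsilon$-slope}. Indeed, every indecomposable of $\Rep(Q)$ is $\epsilon$-semistable by Proposition~\ref{prop:ss_An_tilde}, hence has trivial HN filtration by Remark~\ref{rem:hnss}, so applying Proposition~\ref{prop:direct_sum_HN} repeatedly to the Krull--Schmidt decomposition $V=\bigoplus_I I^{d_I}$ gives $S^j\simeq\bigoplus_{\phi_\epsilon(I)=\theta_j}I^{d_I}$, where $\theta_1>\cdots>\theta_\ell$ are the distinct slopes attained; consequently $\udim_{S^j}=\sum_{\phi_\epsilon(I)=\theta_j}d_I\,\udim_I$. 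I would combine this with Proposition~\ref{prop:An-tilde-euler} (the slope of $\N[u,v]$ is $(1-p_{u,v})/(v-u+1)$, the slope of every $\T[\lambda;w]$ is $0$), with the additivity of $\cL$, with Proposition~\ref{prop:image_indec_on_zigzag} ($\cL\N[u,v]=\mathbf\Sigma[u,v]$, $\cL\T[\lambda;w]=\I[-\infty,\infty]^w$), and with the Krull--Schmidt statement of Proposition~\ref{prop:prep-indec}, to read off that $d^{*}_{u,v}$ equals the multiplicity of $\N[u,v]$ in $V$ and that $d^{*}_{\infty}=\sum_{\lambda,w}w\cdot(\text{mult.\ of }\T[\lambda;w])$. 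Part~(ii) is then immediate: the multiplicity of $\N[u,v]$, and whether it occurs, is exactly $d^{*}_{u,v}$ after normalising $u\in\{0,\dots,n-1\}$ and using the $n$-periodicity of $d^{*}$.

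For part~(i) I would compute $\barc(\cL V)$ via Remark~\ref{rmk:mult_zigzgag_compute}: it suffices to decompose the finite zigzag module $(\cL V)^{0,D}$ with $D=(\Delta+2)n$, a zigzag of length $O(\Delta n)$ whose vector spaces all have dimension $\le\Delta$, and the algorithm of \cite{zigzag_pers} runs in $O(\Delta^4 n)$ on such input. From $\barc(\cL V)$ one recovers $d^{*}$ and $d^{*}_\infty$; then computing each $p_{u,v}$ (two edge-orientation look-ups), hence each slope by Proposition~\ref{prop:An-tilde-euler}, grouping by slope and assembling the dimension vectors yields $\eta(V)$ within the same bound.

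The substantive part is (iii). Fix $[u,v]$ with $p:=p_{u,v}\in\{0,2\}$ and put $W:=v-u+1$. First I would note that $W$ is not a multiple of $n$: a complete wrap-around makes the two exceptional edge positions coincide, forcing $p_{u,v}=1$. So $W=qn+s$ with $1\le s\le n-1$, $\phi_\epsilon(\N[u,v])=(1-p)/W=\pm1/W$, and this slope is attained among all indecomposables of $\Rep(Q)$ \emph{only} by the $\N[u',v']$ with $p_{u',v'}=p$ and width exactly $W$ (all $\T$'s and regular $\N$'s have slope $0$; a different width or a different $p$ gives a different nonzero slope). Locating the unique HN quotient of slope $(1-p)/W$ --- each quotient's slope is computable from its dimension vector, so this uses only $\eta(V)$; if none exists then $d_{u,v}=0$ --- gives $\udim_{S^j}=\sum_{u'\in A}d_{u',u'+W-1}\,\udim_{\N[u',u'+W-1]}$, where $A\subseteq\Z/n$ is the set of admissible left endpoints $u'$ (those for which the length-$s$ window $W_{u'}:=\{u',\dots,u'+s-1\}$ has both boundary edges oriented inward when $p=0$, outward when $p=2$), a set fixed by the orientation of $Q$ and $s$. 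Since $\udim_{\N[u',u'+W-1]}=q\cdot\mathbf1+\chi_{W_{u'}}$, retrieving $d_{u,v}$ reduces to showing $\{q\cdot\mathbf1+\chi_{W_{u'}}:u'\in A\}$ is linearly independent.

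This linear independence is, I expect, the only genuinely non-routine step, and I would argue it as follows. Taking the cyclic first difference $g(m):=f(x_m)-f(x_{m-1})$ of $f:=\udim_{S^j}$ annihilates the $\mathbf1$-term and gives $g(m)=\tilde d_m-\tilde d_{m-s}$ with $\tilde d_m:=d_{m,m+W-1}$ for $m\in A$ and $\tilde d_m:=0$ otherwise; hence $\tilde d$ is determined by $g$ up to an additive constant on each orbit of $m\mapsto m+s$. To fix those constants it suffices that every such orbit meets $\complement A$ --- but $A$ can contain no pair $\{u',u'+s\}$, since that would require the single edge $e_{u'+s}$ to have two different targets, and every orbit of $m\mapsto m+s$ has size $n/\gcd(n,s)>1$, so none lies inside $A$. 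Thus all of $\tilde d$, in particular $d_{u,v}=\tilde d_u$, is recovered. The delicate point is precisely $\gcd(n,s)>1$: there the $n$ cyclic shifts of a length-$s$ window are themselves linearly dependent in $\R^n$, and it is the admissibility constraint on $A$ that restores independence.
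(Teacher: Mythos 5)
Your proposal is correct and follows the same strategy as the paper: semistability of every indecomposable (Proposition~\ref{prop:ss_An_tilde}) plus Proposition~\ref{prop:direct_sum_HN} shows that the HN filtration merely sorts the Krull--Schmidt summands of $V$ by $\epsilon$-slope, the slope formulas of Proposition~\ref{prop:An-tilde-euler} and the lift computations of Proposition~\ref{prop:image_indec_on_zigzag} give the multiplicity relations $d_{u,v}=d^*_{u,v}$ and $\sum_{\lambda,w}w\,d_{\lambda;w}=d^*_\infty$, and parts~(i),(ii) follow as you describe. Your treatment of~(iii) reaches the same conclusion by the same underlying mechanism --- the cyclic first difference of $\udim_{S^j}$ --- but via a longer detour: you set up the full system $g(m)=\tilde d_m-\tilde d_{m-s}$ over $\Z/n$ and close it by showing each $s$-orbit meets $\complement A$ (using the key fact that $A$ contains no pair $\{u',u'+s\}$, since that would force the single edge $e_{(u'+s)\bmod n}$ to have two targets). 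The paper's proof instead evaluates the difference at the single position $m=u_*$, i.e.\ it shows directly that $\dim S^j_{u_*}-\dim S^j_{(u-1)_*}=d_{u,v}$ (or $0$), because the ``leaving'' contribution at $v'+1\equiv u\pmod n$ is ruled out by exactly the same orientation constraint you invoke (that $p_{u',v'}=p_{u,v}$ forces $e_{u_*}$ to have contradictory targets), while the ``entering'' contribution can only come from $[u',v']\equiv[u,v]\pmod{n\Z}$ since lengths agree. Both are valid; the paper's is a one-line computation where yours passes through a solvability argument. (A tiny cosmetic slip: in your description of $A$ the arrows for $p=0$ point outward, not inward, and vice versa for $p=2$, but this does not affect the argument.)
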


\begin{proof}
From Theorem \ref{thm:an-tilde-indec}, we know that there exist integers $d_{u,v} > 0$ and $d_{\lambda;w} > 0$ so that $V$ decomposes as
\[
V \simeq \bigoplus_{[u,v]} \N[u,v]^{d_{u,v}} \oplus \bigoplus_{\lambda,w} \T[\lambda;w],
\] where the first sum is over $[u,v]\subset \Z$ with $u\in [0,n-1]$ such that $\mathbf N[u,v]\in \mathrm{Ind}(V)$ and the second sum is over $(\lambda,w)\in (\mathbb K-\{0\})\times \mathbb N_{>0}$ such that $\mathbf T[\lambda;w]\in \mathrm{Ind}(V)$. Similarly, by Proposition \ref{prop:prep-indec} we have integers $d^*_{u,v} > 0$ and $d^*_\infty > 0$ satisfying
 \[
 {\cL}V \simeq \bigoplus_{[u,v]}\mathbf \Sigma[u,v]^{d^*_{u,v}} \oplus\bigoplus_{(\lambda,w)} \mathbf{I}[-\infty,\infty]^{d^*_\infty}\] 
 By Proposition \ref{prop:image_indec_on_zigzag} and the additivity of the lift functor, we obtain the following relations between the multiplicities of indecomposables in $V$ and in $\cL V$:
   \begin{equation}\left\{\label{eq:rel_mult_V_LV} \begin{array}{ll}
    d_{u,v}&=d^{*}_{u,v} \\
    \sum_{\lambda,w}w \cdot  d_{\lambda;w} &= d^{*}_{\infty}
    \end{array}\right.\end{equation}
    where $[u,v]\subset \Z$ and $(\lambda,w)\in (\mathbb K-\{0\})\times \mathbb N_{>0}$. We use the convention  $d_I=0$ when the indecomposable $I$ is not in $\mathrm{Ind}(V)$, and similarly $d^*_I = 0$  when $I$ is not in $\mathrm{Ind}(\cL V)$. Consider a successive quotient $S^j := V^j/V^{j-1}$ of the HN filtration of $V$. By combining \eqref{eq:rel_mult_V_LV} with Propositions \ref{prop:An-tilde-euler} and \ref{prop:ss_An_tilde}, the dimension vector of $S^j$ can be decomposed as a sum over indecomposables $\mathbf N[u,v]$ with the same slope as $S^j$:
\begin{equation}
    \label{eq:HN_inv_case_slope_not0}\udim_{S^j} = \sum_{\mathbf N[u,v]} d^{*}_{u,v} \cdot \udim_{\mathbf N[u,v]}\end{equation}
except when $\phi_\epsilon(S^j)= 0$, in which case there is an extra term induced by the summands of the form $\mathbf T[\lambda;w]$:
\begin{equation}
    \label{eq:HN_inv_case_slope_0}\udim_{S^j}= 
d_{\infty}^{*} \cdot \udim_{ \mathbf T[1;1]}+\sum_{\mathbf N[u,v]} d^{*}_{u,v} \cdot  \udim_{\mathbf N[u,v]}.\end{equation}
This formula simplifies further since $\udim_{\T[1;1]}$ equals one at every vertex of $Q$. We now turn to the three assertions which must be established.

\begin{enumerate}[label=(\roman*)]
\item From Remark \ref{rmk:mult_zigzgag_compute} we know that $d^*$ can be deduced from the irreducible decomposition  of a  zigzag persistence module of length $\mathcal{O}(\Delta n)$ with all spaces of dimension at most $\Delta$. This gives the desired worst-case complexity as the algorithm \cite{zigzag_pers} involves performing one Gaussian elimination per map of the zigzag persistence module. Moreover,  equations \eqref{eq:HN_inv_case_slope_not0} and \eqref{eq:HN_inv_case_slope_0} show that $\eta(V)$ can be deduced from $d^{*}$ by updating at most $\Delta$ times one of  the dimensions  $\dim S^j_x$ at some fixed vertex $x$.
    \item    This is a direct consequence of the relations \eqref{eq:rel_mult_V_LV}.
    \item Consider an interval $[u,v]\subset\Z$ such that $p_{u,v}\neq 1$, and let  $\mathcal A_{u,v}$ be the set of indecomposables with the same Euler slope as $\mathbf N[u,v]$ that  occur in the irreducible decomposition of $V$. By Proposition \ref{prop:An-tilde-euler},  the slope of $\mathbf N[u,v]$ is nonzero and the  intervals $[u',v']\subset \Z$ satisfying $\mathbf N[u',v']\in \mathcal A_{u,v}$  have the same value of $p$ and the same length as $[u,v]$.
    
    We claim that the membership of $\mathbf N[u,v]$ in $\mathcal A_{u,v}$  and its multiplicity in the decomposition of $V$ can be obtained via the relation
    \[\dim S^j_{u_*}-\dim S^j_{(u-1)_*} = \begin{cases}d_{u,v}&\text{if }\mathbf N[u,v]\in\mathcal A_{u,v}\\0&\text{otherwise}\end{cases}\]
where $a_*$  denotes the residue modulo $n$ of an integer $a$. Indeed, by equation \eqref{eq:HN_inv_case_slope_not0}, the difference $\dim S^j_{u_*} - \dim S^j_{(u-1)_*}$ can be  decomposed as
 \[
 \sum_{\mathbf N[u',v']\in \mathcal A_{u,v}} d_{u',v'} \cdot \left[\#([u',v']\cap (n\Z+u))-\#([u',v']\cap (n\Z+u-1))\right].
 \] 
The summands in the above sum are null unless either $u'=u\text{ mod  }n$ or $v'+1 =u \text{ mod }n$. The latter case does not happen as $p_{u',v'}=p_{u,v}$ forces $e_{u\text{ mod }n}$ and $e_{(v'+1) \text{ mod }n}$ to have  different orientations. The former case, $u'=u\text{ mod }n$, only happens when $[u',v']$ is a translation by $n\Z$ of $[u,v]$ since $[u',v']$ and $[u,v]$ are required to have the same length.
\end{enumerate}
\end{proof}
 
Finally, we note that although the results obtained above require $\K$ to be be algebraically closed, one can always fix bases for the vector spaces of a given $V \in \Rep(Q)$ and consider a new representation $V^+$ with the same matrix description but now over the algebraic closure $\overline{\K}$ of $\K$. The isomorphism class of $V^+$ only depends on the isomorphism class of $V$, and as such, $\eta(V^+)$ still constitutes an (incomplete) invariant of $V$. By Theorem \ref{thm:main} above, this invariant can be related to the barcode of $\cL V^+$ in the category of $\UQ$-representations valued in vector spaces over $\overline{\K}$.

\bibliographystyle{abbrv}
\bibliography{ref}

\end{document}